\newtheorem{theorem}{Theorem}[section]
\newtheorem{lemma}[theorem]{Lemma}
\newtheorem{corollary}[theorem]{Corollary}
\newtheorem{proposition}[theorem]{Proposition}
\newtheorem{varexample}[theorem]{Example}
\newcommand{\F}{\mathbb{F}}
\newcommand{\K}{\mathbb{K}}
\newcommand{\Q}{\mathbb{Q}}
\newcommand{\HH}{\mathbb{H}}
\newcommand{\LL}{\mathbb{L}}
\theoremstyle{definition}
\title{Commuting graphs of $p$-adic matrices}
\author{Ralph Morrison}
\date{January 2023}
\begin{document}

\maketitle

\begin{abstract} We study the commuting graph of \(n\times n\) matrices over the field of \(p\)-adics \(\mathbb{Q}_p\), whose vertices are non-scalar \(n\times n\) matrices with entries in \(\mathbb{Q}_p\) and whose edges connect pairs of matrices that commute under matrix multiplication.  We prove that this graph is connected if and only if \(n\geq 3\), with \(n\) neither prime nor a power of \(p\).  We also prove that in the case of \(p=2\) and \(n=2q\) for \(q\) a prime with \(q\geq 7\), the commuting graph has the maximum possible diameter of \(6\); these are the first known such examples independent of the axiom of choice. We also find choices of $p$ and $n$ yielding diameter $4$ and diameter $5$ commuting graphs, and prove general bounds depending on $p$ and $n$.
\end{abstract}

\section{Introduction}

Let \(\F\) be a field, let \(n\geq 2\), and let \(M_n(\F)\) denote the set of \(n\times n\) matrices over \(\F\).  The \emph{commuting graph of \(M_n(\F)\)}, denoted \(\Gamma(\F,n)\), has as its vertices all nonscalar \(n\times n \) matrices with entries in \(\F\), with two vertices adjacent if and only if the corresponding matrices commute under matrix multiplication.  The \emph{commuting distance} between two nonscalar matrices \(A,B\in M_n(\F)\), denoted \(d(A,B)\), is their distance on \(\Gamma(\F,n)\).  That is, if
\[A=X_0-X_1-\cdots-X_{m-1}-X_m=B\]
is the shortest chain of nonscalar matrices connecting \(A\) and \(B\) where each pair of adjacent matrices commutes, then \(d(A,B)=m\); and if no such chain exists, then \(d(A,B)=\infty\).

A great deal of work has been done in studying the connectivity and diameter of \(\Gamma(\F,n)\), for various choices of \(n\) and \(\F\).  It turns out that for any field \(\F\), \(\Gamma(2,\F)\) is disconnected with cliques as its components \cite{disconnected_n=2}.  It was shown in \cite{when_connected} that \(\Gamma(\Q,n)\) is disconnected for all \(n\), while \(\Gamma(\F,n)\) is connected for \(n\geq 3\) if \(\F=\mathbb{R}\) or \(\F\) is algebraically closed.  Moreover, in the event that \(\Gamma(\F,n)\) is connected, its diameter is either \(4\), \(5\), or \(6\) \cite{diameter_6}.

It was proved in \cite{diameter_6} that over an algebraically closed field and with \(n\geq 3\), the commuting graph has diameter \(4\).  The same result was proved for \(\mathbb{R}\) in \cite{real1,real2,real3}.  There are also concrete examples of commuting graphs with diameter \(5\) \cite{diameter_5_equal}.  A longstanding conjecture was that the diameter is never equal to \(6\) \cite{diameter_6}.  This was refuted by a counterexample in \cite{counterexample_to_conjecture}, which used Zorn's lemma to construct a field giving rise to a commuting graph of diameter \(6\).

In this paper we study the commuting graph of \(n\times n\) matrices over \(\Q_p\), the field of \(p\)-adic numbers.  Our first result determines when \(\Gamma(\mathbb{Q}_p,n)\) is connected.  It follows readily from work in \cite{when_connected} combined with standard results on finite field extensions of \(\mathbb{Q}_p\).

\begin{theorem}\label{theorem:connectivity_qp} Let \(n\geq 3\) and let \(p\) be prime. The commuting graph \(\Gamma(\mathbb{Q}_p,n)\) is connected if and only if \(n\) is neither prime nor a power of \(p\).
\end{theorem}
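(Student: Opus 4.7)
The strategy is to combine the structural theorem of \cite{when_connected} with the classification of finite extensions of $\mathbb{Q}_p$. I expect \cite{when_connected} to state that, for $n\geq 3$, $\Gamma(\F,n)$ is disconnected if and only if $\F$ admits a \emph{primitive} degree-$n$ extension---one with no proper intermediate subfield. The intuition is that such a $K$ arises as $\F[A]$ for a non-derogatory matrix $A$ with irreducible minimal polynomial, and the absence of intermediate subfields means every nonscalar element of $C(A)=\F[A]$ again has $\F[A]$ as its centralizer, isolating the component $K\setminus\F$; conversely, any intermediate subfield $L\subsetneq K$ gives $B\in L\setminus\F$ commuting with $A$ whose larger centralizer $M_{n/[L:\F]}(L)$ contains idempotents in the bulk. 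Under this reduction, it suffices to show that $\mathbb{Q}_p$ admits a primitive extension of degree $n$ if and only if $n$ is prime or a power of $p$.

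For the ``if'' direction: when $n$ is prime, every degree-$n$ extension is primitive by the tower law, and Eisenstein polynomials produce one. When $n=p^k$, I would construct a primitive extension as the fixed field $L^H$ of a point stabilizer in a Galois extension $L/\mathbb{Q}_p$ whose Galois group is a solvable primitive permutation group of degree $p^k$. A concrete model is $\mathrm{Gal}(L/\mathbb{Q}_p)\cong \mathbb{F}_p^k\rtimes(\mathbb{F}_{p^k}^*\rtimes C_k)$, with wild inertia the regular normal elementary abelian $p$-subgroup $\mathbb{F}_p^k$, tame inertia $\mathbb{F}_{p^k}^*$ of order $p^k-1$ (legitimate because the unramified subextension has residue field $\mathbb{F}_{p^k}$), and Frobenius $C_k$. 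The main obstacle lies here: one must verify realizability of this Galois group over $\mathbb{Q}_p$ with the prescribed wild-tame-unramified filtration, by layering a wild $\mathbb{F}_p^k$-extension over a tame extension of $\mathbb{Q}_{p^k}$ or by invoking a classical realizability result for local Galois groups.

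For the converse, suppose $n$ is neither prime nor a power of $p$ and that $K/\mathbb{Q}_p$ is primitive of degree $n$; let $L$ be its Galois closure and $G=\mathrm{Gal}(L/\mathbb{Q}_p)$. Then $G$ acts primitively of degree $n$ on the cosets of $\mathrm{Gal}(L/K)$, and $G$ is solvable since the absolute Galois group of $\mathbb{Q}_p$ is prosolvable. By the classical theorem on solvable primitive permutation groups, $n=q^m$ for some prime $q$ and the socle of $G$ is the unique minimal normal subgroup, elementary abelian of order $q^m$. If $q\neq p$, the wild inertia $P\triangleleft G$ is a normal $p$-subgroup with $P\cap\mathrm{soc}(G)=1$; uniqueness of the minimal normal subgroup then forces $P=1$, so $G$ is metacyclic, and a metacyclic group cannot contain a non-cyclic elementary abelian normal subgroup, forcing $m=1$ and $n=q$ prime. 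Otherwise $q=p$ and $n=p^m$ is a power of $p$. Either case contradicts the hypothesis on $n$.
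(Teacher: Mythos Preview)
Your reduction to the existence of primitive degree-$n$ extensions of $\mathbb{Q}_p$ matches the paper exactly, but the two arguments diverge from there.

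For the converse (no primitive extension when $n$ is neither prime nor a power of $p$), the paper never passes to the Galois closure. It works directly with the tower $\mathbb{Q}_p\subseteq\mathbb{K}'\subseteq\mathbb{K}''\subseteq\mathbb{L}$, where $\mathbb{K}'$ is the maximal unramified subextension, $\mathbb{K}''$ the maximal tame subextension, and $[\mathbb{L}:\mathbb{K}'']$ a power of $p$. Primitivity of $\mathbb{L}$ forces exactly one layer to be nontrivial; the unramified layer is cyclic (so $n$ prime), the tame totally ramified layer is $\mathbb{Q}_p((up)^{1/n})$ with obvious subfields (so $n$ prime), and the wild layer gives $n=p^k$. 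This is shorter and avoids any permutation-group machinery. Your route via solvable primitive groups and the socle is heavier but perfectly viable, and arguably exports better to other local fields.

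One imprecision to fix: the sentence ``a metacyclic group cannot contain a non-cyclic elementary abelian normal subgroup'' is false as stated (take $C_q\times C_q$). What makes your argument work is that $N=\mathrm{soc}(G)$ is the \emph{unique minimal} normal subgroup, so for the cyclic normal inertia $I\trianglelefteq G$ either $N\leq I$ (hence $N$ cyclic) or $I=1$ (hence $G\cong G/I$ cyclic, so $N$ cyclic). State it that way and the gap closes.

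For the ``if'' direction with $n=p^k$, you flag the realizability of your candidate group as the main obstacle and do not resolve it. The paper sidesteps the construction entirely by citing \cite{powers_of_p} for the existence of a primitive wild extension of degree $p^k$. Either cite such a result or complete the local realizability argument; as written, this direction of your proof is incomplete.
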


Our next major result is Theorem \ref{theorem:sufficient_condition}, which provides a sufficient condition on \(n\) and \(p\) to conclude that \(\Gamma(\mathbb{Q}_p,n)\) has the maximum possible diameter of \(6\).  This theorem, and the proof thereof, closely follows the work of \cite{diameter_commuting}, where it was proved that for \(q\) prime, the disconnected graph \(\Gamma(\mathbb{Q},2q)\) consists of cliques plus one component with diameter equal to \(6\).  We use our result to prove the following.

\begin{theorem}\label{theorem:q2_14} Let \(q\geq 7\) be prime.  The commuting graph of \(2q\times 2q\) matrices over \(\Q_2\) has diameter \(6\).
\end{theorem}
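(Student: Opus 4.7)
The plan is to apply Theorem \ref{theorem:sufficient_condition} with $p = 2$ and $n = 2q$. Connectivity is already in hand: Theorem \ref{theorem:connectivity_qp} gives that $\Gamma(\Q_2, 2q)$ is connected, since $q \geq 7$ prime makes $2q$ neither prime nor a power of $2$. The general bound of $6$ on the diameter of any connected commuting graph, from \cite{diameter_6}, then reduces the theorem to producing two nonscalar matrices in $M_{2q}(\Q_2)$ at commuting distance at least $6$; equivalently, to verifying the hypotheses of Theorem \ref{theorem:sufficient_condition} in this setting.

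The hypotheses of Theorem \ref{theorem:sufficient_condition} should translate, as in the rational case of \cite{diameter_commuting}, into the existence of irreducible polynomials of degree $2q$ over $\Q_2$ whose root fields are rigid in a suitable sense---most naturally, that the degree-$2q$ extension $K/\Q_2$ they cut out contains no proper nontrivial subfield (in particular none of degree $2$ or $q$). This rigidity forces the centralizers of the corresponding companion matrices to be maximal commutative subalgebras with no smaller common commutant than necessary, which is precisely what manufactures the length-$6$ obstruction.

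To verify this hypothesis over $\Q_2$, I would appeal to the structure theory of finite extensions of local fields: every positive integer is the degree of both an unramified and a totally ramified extension of $\Q_2$, and Krasner's lemma provides the control needed to lift candidate irreducible polynomials from $\Q$ or from residue data. By choosing the ramification index and residue degree of the degree-$2q$ extension so that no intermediate subfield of degree $2$ or $q$ can arise (for instance, a totally ramified extension whose Eisenstein polynomial is chosen to have the correct Galois behaviour), I would produce the required polynomial. The assumption $q \geq 7$ enters here because for smaller odd primes the classification of degree-$2q$ extensions of $\Q_2$ is too constrained to simultaneously satisfy both the irreducibility and the subfield-avoidance requirements.

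The main obstacle is exactly this field-theoretic bookkeeping: local fields are rigid enough that one cannot simply transport the $\Q$-construction of \cite{diameter_commuting} verbatim, and one must instead exhibit the extension of $\Q_2$ by hand or through a careful case analysis of ramification and Galois data. Once a suitable irreducible polynomial of degree $2q$ is identified and its root field shown to avoid the forbidden intermediate subfields, Theorem \ref{theorem:sufficient_condition} packages the remaining work and delivers the diameter-$6$ conclusion.
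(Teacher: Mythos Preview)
Your plan has a genuine gap at its central step: you have misread what Theorem~\ref{theorem:sufficient_condition} asks for. The subgroup condition ``$H\le K$ with $H$ contained in no other proper subgroup of $G$, $[G:K]=q$, $[K:H]=2$, $K\trianglelefteq G$'' translates on the field side to a degree-$2q$ extension $\HH/\Q_2$ having \emph{exactly one} proper intermediate subfield, namely a degree-$q$ Galois subfield $\K$. You instead ask for $\HH$ with \emph{no} proper intermediate subfield. That cannot work for two independent reasons. First, such a primitive degree-$2q$ extension of $\Q_2$ does not exist: Lemma~\ref{lemma:qp_intermediate} says primitive degree-$n$ extensions of $\Q_p$ occur only when $n$ is prime or a power of $p$, and $2q$ with $q\ge 7$ is neither. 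Second, even were such an $\HH$ available, the corresponding $H=\mathrm{Gal}(\LL/\HH)$ would be maximal in $G$, leaving no room for the required intermediate normal subgroup $K$.

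The paper's argument runs differently. One looks at degree-$2q$ extensions $\HH/\Q_2$ with residue degree $f=q$ and ramification index $e=2$; each automatically contains the (Galois) unramified degree-$q$ subfield $\K$. Krasner's count gives $2^{q+2}-2$ such extensions. A short compositum argument shows that any such $\HH$ can contain at most one quadratic subfield, and distinct $\HH$'s cannot share the same quadratic subfield. Since $\Q_2$ has only $7$ quadratic extensions and $2^{q+2}-2>7$, some $\HH$ has no quadratic subfield at all, hence $\K$ is its unique intermediate subfield. Taking the Galois closure then delivers exactly the $H\le K\trianglelefteq G$ configuration needed for Theorem~\ref{theorem:sufficient_condition}. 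Note also that the hypothesis $q\ge 7$ is consumed entirely by Theorem~\ref{theorem:sufficient_condition} (via Lemma~\ref{lemma:2}); the counting argument above goes through for every odd prime $q$, so your attribution of the $q\ge 7$ restriction to the field-theoretic step is off.
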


This answers in the affirmative the question posed in \cite[\S 4]{real3} of whether a commuting graph of diameter \(6\) could be proved to exist without using the axiom of choice.  Indeed, that paper pointed to the \(p\)-adics as a natural place to search for such an example.

This large diameter is not universal for all connected \(p\)-adic commuting graphs. We present infinite families of examples with diameter equal to \(4\) (Proposition \ref{prop:diameter_four}), diameter at least \(5\) (Proposition \ref{prop:square_of_prime}), and diameter at most \(5\) (Proposition \ref{prop:sqrt_n} and Corollaries \ref{corollary:small_prime_factor} and \ref{corollary:congruence}), as well as an example with diameter exactly $5$ (Theorem \ref{theorem:all_diameters_possible}).

Our paper is organized as follows.  In Section \ref{section:connectivity} we prove Theorem \ref{theorem:connectivity_qp}.  In Section \ref{section:diameter} we provide our sufficient condition for the commuting graph over the \(p\)-adics to have diameter \(6\), and apply it to \(2q\times 2q\) matrices over \(\Q_2\) with \(q\geq 7\) a prime.  In Section \ref{section:possible_diameters} we present other bounds on diameter, and present directions for future work.

\noindent \textbf{Acknowledgements.} The author thanks Joachim K\"{o}nig for assistance in the proof of Lemma \ref{lemma:mathoverflow}.

\section{Connectivity of $p$-adic commuting graphs}
\label{section:connectivity}

We say that a field extension \(\mathbb{K}/\mathbb{F}\) is \emph{primitive} if there is no intermediate field between \(\mathbb{K}\) and \(\mathbb{F}\).  The connectivity of \(\Gamma(\F,n)\) can be reduced to a question of the existence or non-existence of certain primitive field extensions. 

\begin{theorem}\label{theorem:connected_by_fields}[\cite{when_connected}]  Let \(\mathbb{F}\) be a field and \(n\geq 3\).  The commuting graph \(\Gamma(\mathbb{F},n)\) is connected if and only if there does not exist a primitive field extension of \(\mathbb{F}\) with degree \(n\).
\end{theorem}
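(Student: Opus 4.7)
The plan is to connect the graph-theoretic question about $\Gamma(\F, n)$ to the algebraic structure of the subalgebra $\F[A]$ generated by a non-scalar matrix $A$, since $\F[A]$ is always commutative and sits inside the centralizer of $A$. Two key structural facts drive the argument: for a non-derogatory (cyclic) matrix $A$, the centralizer of $A$ in $M_n(\F)$ is exactly $\F[A]$; and when $A$ has irreducible minimal polynomial of degree $n$, $\F[A]$ is itself a degree-$n$ field extension of $\F$.

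For the reverse direction ($\Rightarrow$, i.e.\ existence of a primitive degree-$n$ extension implies disconnected), I would start with such an extension $\K/\F$, pick $\alpha$ with $\K = \F(\alpha)$, and let $A$ be the companion matrix of the minimal polynomial of $\alpha$. Then $\F[A] \cong \K$ and the centralizer of $A$ equals $\F[A]$. Primitivity of the extension means that every non-scalar $B \in \F[A]$ satisfies $\F[B] = \F[A]$, since otherwise $\F[B]$ would be a proper intermediate field; hence the centralizer of every non-scalar $B \in \F[A]$ is again $\F[A]$. A short inductive walk argument then confines the connected component of $A$ to the non-scalar elements of $\F[A]$, which for $n \geq 3$ is a proper subset of the full vertex set, so $\Gamma(\F, n)$ is disconnected.

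For the forward direction ($\Leftarrow$), I would fix an arbitrary non-scalar $A \in M_n(\F)$ and produce a walk from $A$ to a fixed ``hub'' matrix such as the rank-one idempotent $E_{11}$, splitting into three cases by the shape of the minimal polynomial $m_A$. If $m_A$ is reducible, a coprime factorization yields a nontrivial $A$-invariant direct sum decomposition of $\F^n$, and the associated projector is a non-scalar idempotent commuting with $A$; the non-scalar idempotents in turn connect to $E_{11}$ via short walks through diagonal matrices. If $m_A$ is irreducible of degree $d$ with $1 < d < n$, then $n = kd$ with $k \geq 2$ and $A$ is similar to a block-diagonal matrix with $k$ equal companion blocks; its centralizer contains non-scalar matrices with reducible minimal polynomial (for instance, matrices acting as different scalars on different blocks), reducing to the first case. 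Finally, if $m_A$ is irreducible of degree exactly $n$, then $\F[A]$ is a degree-$n$ field extension of $\F$, and by hypothesis it is not primitive, so there is an intermediate field $\F \subsetneq \LL \subsetneq \F[A]$; choosing $B \in \F[A]$ generating $\LL$ gives a non-scalar matrix commuting with $A$ whose minimal polynomial has degree $[\LL:\F] < n$, reducing to one of the previous two cases.

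The main obstacle is the final subcase of the forward direction: one must verify that the descent via an intermediate field actually produces a matrix $B$ whose minimal polynomial over $\F$ has degree $[\LL:\F]$, which follows because the evaluation map $\F[x]/(m_A) \to \F[A]$ is an isomorphism of $\F$-algebras and therefore translates subfield structure directly into minimal-polynomial degree. The other routine technical step is the bridging lemma for idempotents, which ultimately reduces to exhibiting a few explicit commuting matrices. The unifying insight is that the subalgebra $\F[A]$ controls an entire commuting neighborhood of $A$, and only when $\F[A]/\F$ is a primitive degree-$n$ extension can $A$ be trapped in a clique disconnected from the rest of the graph.
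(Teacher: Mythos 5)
A preliminary remark: the paper does not prove this statement itself --- it is quoted from \cite{when_connected} --- so your argument has to stand on its own. Your overall strategy (the centralizer of a non-derogatory matrix is $\F[A]$; a primitive degree-$n$ extension realized as $\F[A]$ traps the component of $A$ inside the non-scalar elements of $\F[A]$; conversely, descend through an intermediate field to a commuting matrix with irreducible minimal polynomial of smaller degree and funnel everything to an idempotent hub) is the right one, and your reverse direction is correct as sketched.

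There is, however, a genuine gap in the forward direction, in your first case. You assert that if $m_A$ is reducible, then a coprime factorization yields a nontrivial $A$-invariant direct sum decomposition and hence a non-scalar idempotent commuting with $A$. This fails whenever $m_A$ is a power of a single irreducible polynomial, $m_A=p(x)^k$ with $k\geq 2$: such a polynomial is reducible but has no factorization into nontrivial coprime factors, so no decomposition or projector is produced. Concretely, a non-scalar nilpotent matrix ($m_A=x^k$) or unipotent matrix is handled by none of your three cases' arguments (it is not in cases 2 or 3 either, since its minimal polynomial is not irreducible), so for such $A$ the walk to $E_{11}$ is never constructed. The hole is fixable: set $B=p(A)$, a nonzero nilpotent commuting with $A$; if the Jordan form of $B$ has at least two blocks, a block projection is a non-scalar idempotent commuting with $B$, while if $B$ is a single nilpotent Jordan block of size $n$, then $B^2$ is a nonzero \emph{derogatory} nilpotent commuting with $B$ (this is where $n\geq 3$ is needed), which in turn commutes with a non-scalar idempotent. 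As written, though, your case analysis does not establish connectivity. A smaller caution: the bridging of non-scalar idempotents to $E_{11}$ is indeed standard for $n\geq 3$, but it goes through rank-one idempotents with image/kernel conditions rather than literally through diagonal matrices, and it (together with the prime-power case) is exactly where the hypothesis $n\geq 3$ must do its work, since the statement is false for $n=2$.
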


We recall some standard results on the structure of finite field extensions of the \(p\)-adic numbers \(\mathbb{Q}_p\); we refer the reader to \cite[Ch. II, \S 7]{neukirch} and  \cite[Ch. 14.3, Ch. 16]{hasse} for more details.

Let \(\LL \supset\K\supset\mathbb{Q}_p\) be finite field extensions, where \([\LL:\K]=n\), and let \(\pi\) be a uniformizer for \(\K\), with \(p^h\) the number of elements of the residue field of \(\K\).  We may decompose the extension \(\LL/\K\) as
\[\mathbb{K}\subseteq\mathbb{K}'\subseteq \mathbb{K}''\subseteq\mathbb{L},\]
where \(\mathbb{K}'\) is an unramified extension of \(\K\) of degree \(f\),  \(\mathbb{K}''\) is a tame, purely ramified extension of \(\mathbb{K}'\) of degree \(e\), and \([\mathbb{L}:\mathbb{K}'']=p^k\) for some \(k\).  We have that
\[\K'=\K(\zeta)\]
for some primitive \((p^{hf}-1)st\) root of unity \(\zeta\), and that the extension \(\mathbb{K}'/\mathbb{K}\) is Galois with cyclic Galois group. In fact, for every \(f\), \(\mathbb{K}\) has a unique unramified extension of degree \(f\), again Galois with cyclic Galois group.  We also have that up to conjugation,
\[\K''=\mathbb{K}(\zeta,(\pi \zeta^r)^{1/e}),\]
where \(0\leq r<\gcd(e,p^{hf}-1)\).  Moreover, \(\K''\) is a Galois extension of \(\mathbb{K}\) if and only if \(p^{hf}\equiv 1\mod e\) and \(r(p^h-1)\equiv 0\mod e\); and the extension is Abelian if and only if \(p^h-1\equiv 0\mod e\).

We now present the following lemma, which will allow us to apply Theorem \ref{theorem:connected_by_fields}.  We let \(C_n\) denote the cyclic group with \(n\) elements.

\begin{lemma}\label{lemma:qp_intermediate} Let \(p\) be prime and let \(n\geq 2\).  There exists a primitive field extension \(\mathbb{L}\) of  \(\mathbb{Q}_p\)  of degree \([\mathbb{L}:\mathbb{Q}_p]=n\) if and only if \(n\) is either prime or a power of \(p\).
\end{lemma}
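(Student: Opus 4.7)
The plan is to prove both implications using the structure theorem for extensions of $\mathbb{Q}_p$ recalled above. Given any degree-$n$ extension $\mathbb{L}/\mathbb{Q}_p$ with decomposition $\mathbb{Q}_p \subseteq \mathbb{K}' \subseteq \mathbb{K}'' \subseteq \mathbb{L}$ into unramified, tame, and wild parts of degrees $f$, $e$, and $p^k$, the key observation is that primitivity of $\mathbb{L}$ forces both $\mathbb{K}'$ and $\mathbb{K}''$ to coincide with $\mathbb{Q}_p$ or $\mathbb{L}$. This reduces the question to three mutually exclusive possibilities: (A) $\mathbb{L}/\mathbb{Q}_p$ is unramified of degree $n$, (B) tamely totally ramified of degree $n$, or (C) wildly totally ramified of degree $n = p^k$.

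For the forward direction, I assume $n$ is neither prime nor a power of $p$ and rule out each of (A), (B), (C). Case (C) is immediate by hypothesis. In case (A), the extension is Galois with cyclic group $C_n$, so since $n$ is composite $C_n$ has a proper nontrivial subgroup whose fixed field is an intermediate subextension. In case (B), writing $n = ab$ with $a, b > 1$ and using the explicit form $\mathbb{L} = \mathbb{Q}_p((p\zeta^r)^{1/n})$ from the structure theorem, I argue that the element $\alpha^a$ (for $\alpha = (p\zeta^r)^{1/n}$) generates a proper intermediate field of degree $b$: it satisfies the Eisenstein polynomial $x^b - p\zeta^r$ and has $p$-adic valuation $1/b \notin \mathbb{Z}$, so it lies strictly between $\mathbb{Q}_p$ and $\mathbb{L}$.

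For the backward direction, if $n$ is prime, then $\mathbb{L} = \mathbb{Q}_p(p^{1/n})$ defined by $x^n - p$ has degree $n$ by the Eisenstein criterion and is primitive by the tower law; this also covers $n = p$. The main obstacle is the remaining case $n = p^k$ with $k \geq 2$: any Galois degree-$p^k$ extension has a $p$-group Galois group whose maximal subgroups all have index $p$, so a primitive degree-$p^k$ extension cannot be Galois when $k \geq 2$. I plan to handle this by exhibiting, for each $p$ and $k$, an explicit Eisenstein polynomial of degree $p^k$ over $\mathbb{Z}_p$ whose splitting field has Galois group acting primitively on the $p^k$ roots (for instance, $S_{p^k}$ or the affine group $\mathrm{AGL}_k(\mathbb{F}_p)$), so that the stabilizer of a root is a maximal subgroup and the corresponding degree-$p^k$ fixed field is the required primitive extension. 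As a concrete instance, for $p = 2$ and $k = 2$ the Eisenstein polynomial $x^4 + 2x + 2$ has Galois group $S_4$ over $\mathbb{Q}_2$, yielding a primitive degree-$4$ extension.
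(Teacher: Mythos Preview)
Your argument for the ``only if'' direction (your forward direction) and for the case $n$ prime in the ``if'' direction matches the paper's proof essentially line for line: same decomposition into unramified/tame/wild parts, same three-case analysis, same use of cyclicity and of the explicit uniformizer root to produce intermediate fields.

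The genuine gap is in the case $n=p^k$ with $k\geq 2$. Your plan is to realize $S_{p^k}$ or $\mathrm{AGL}_k(\mathbb{F}_p)$ as a Galois group over $\mathbb{Q}_p$, but this cannot work in general: every finite Galois extension of a local field has \emph{solvable} Galois group (the wild inertia is a normal $p$-group, the tame quotient is cyclic, and the unramified quotient is cyclic). For $p^k\geq 5$ the symmetric group $S_{p^k}$ is not solvable, and $\mathrm{AGL}_k(\mathbb{F}_p)$ contains $\mathrm{SL}_k(\mathbb{F}_p)$, which is non-solvable for most $(p,k)$ as well. Your $S_4$ example over $\mathbb{Q}_2$ survives only because $S_4$ happens to be solvable; it does not indicate a general construction. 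So the proposed mechanism fails precisely where you need it.

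The paper does not attempt a direct construction here; it simply cites \cite{powers_of_p} for the existence of a primitive degree-$p^k$ extension of $\mathbb{Q}_p$. If you want a self-contained argument, you would need to produce, for each $p$ and $k$, a \emph{solvable} primitive permutation group of degree $p^k$ (necessarily a subgroup of $\mathrm{AGL}_k(\mathbb{F}_p)$ containing the translations and acting irreducibly on $\mathbb{F}_p^{\,k}$) together with a proof that it is realizable over $\mathbb{Q}_p$ with the correct point stabilizer---a substantially harder task than what you have sketched.
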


\begin{proof}  First assume that \(n\) is either prime or a power of \(p\).  If \(n\) is prime, then any degree \(n\) extension of \(\mathbb{Q}_p\) is primitive by the Tower Law; and there do exist extensions of every possible degree of \(\Q_p\) (for instance, for every \(n\) there exists a unique unramified extension of \(\Q_p\) of degree \(n\)).  If \(n=p^k\) for some \(k\geq 2\), then there exists a primitive extension of \(\mathbb{Q}_p\) of degree \(n\); see for instance \cite{powers_of_p}.

Now assume that there exists a primitive field extension \(\mathbb{L}\) of \(\mathbb{Q}_p\) of degree \(n\).  Consider the usual field decomposition
\[\mathbb{Q}_p\subseteq\mathbb{K}'\subseteq \mathbb{K}''\subseteq\mathbb{L},\]
where \(\mathbb{K}'\) is an unramified extension of \(\mathbb{Q}_p\),  \(\mathbb{K}''\) is a tame, purely ramified extension of \(\mathbb{K}'\), and \([\mathbb{L}:\mathbb{K}'']=p^k\) for some \(k\).  Since \(\LL\) is a primitive extension, exactly one of these field extensions is nontrivial.
\begin{itemize}
\item If the first extension is nontrivial, then $\LL$ is an unramified extension of $\Q_p$. This means it is Galois with cyclic Galois group \(C_n\).  As there are no intermediate fields, by the Fundamental Theorem of Galois Theory there must be no proper nontrivial subgroups of  \(C_n\), meaning that \(n\) is prime.
\item If the second extension is nontrivial, then $\LL$ is a tame, totally ramified extension of $\Q_p$.   It follows that \(\mathbb{K}=\mathbb{Q}_p((up)^{1/n})\) for some unit \(u\) in the \(p\)-adic integers.  If \(n\) were composite, say with \(d|n\) where \(1<d<n\), then there would be an intermediate field \(\mathbb{Q}_p((up)^{1/d})\).  As no intermediate field exists, \(n\) must be prime.
 \item If the third extension is nontrivial, then $n=[\LL:\Q_p]=p^k$ for some \(k\).
 \end{itemize}

In all cases we have that \(n\) is either prime or a power of \(p\), completing the proof.
\end{proof}

Combining Theorem \ref{theorem:connected_by_fields} and Lemma \ref{lemma:qp_intermediate}, we immediately have Theorem \ref{theorem:connectivity_qp}.

We have one more lemma on $p$-adic fields to prove, which will be useful in Section \ref{section:possible_diameters}.

\begin{lemma}\label{lemma:mathoverflow}
Suppose that \(p\nmid n\), and that \(q\) is the smallest prime factor of \(n\).  Then for any finite extensions \(\LL\supset \K\supset \mathbb{Q}_p\) with \([\LL:\K]=n\), there exists an intermediate subextension of degree \(q\) over \(\K\).
\end{lemma}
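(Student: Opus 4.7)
The plan is to exploit the tame structure theorem recalled at the start of this section. Because $p\nmid n$, in the canonical decomposition $\K \subseteq \K' \subseteq \K'' \subseteq \LL$ the wild part $[\LL:\K''] = p^k$ must equal $1$, so $\LL = \K''$ is tame over $\K$. Thus $\LL = \K'(\alpha)$ with $\alpha^e = \pi\zeta^r$ in the notation of that theorem, $\K' = \K(\zeta)$ is unramified of degree $f$, and $n = ef$. Since $q$ is prime and divides $ef$, I split on whether $q \mid f$ or $q \mid e$ (with $q \nmid f$ in the latter case).

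If $q \mid f$, the conclusion is immediate: $\K$ has a unique unramified extension of each degree, and these are nested by divisibility, so the unramified degree-$q$ extension of $\K$ sits inside $\K' \subseteq \LL$. The substantive case is $q \mid e$ with $q \nmid f$, where my goal is to produce a totally ramified degree-$q$ subextension of $\LL/\K$ of the form $\K(\delta)$ with $\delta^q$ a uniformizer of $\K$. I would split into two subcases based on the residue field arithmetic. If $q\nmid p^{hf}-1$, then $x\mapsto x^q$ is a bijection on $\F_{p^{hf}}^\ast$ and Hensel gives $\zeta^r = y^q$ for some unit $y\in\K'$; the element $\delta := \alpha^{e/q}/y \in \LL$ then satisfies $\delta^q = \pi$, and $X^q-\pi$ is Eisenstein over $\K$. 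If instead $q \mid p^h-1$, then $\K$ contains the $q$-th roots of unity and $\zeta^m \in \K$ exactly when $N := (p^{hf}-1)/(p^h-1)$ divides $m$; using $p^h \equiv 1 \pmod q$ gives $N \equiv f \pmod q$, and since $q\nmid f$ we have $\gcd(q,N)=1$, so I can solve $qs \equiv -r \pmod N$ and take $\gamma := \alpha^{e/q}\zeta^s \in \LL$, whose $q$-th power $\pi\zeta^{r+qs}$ is a uniformizer of $\K$, again giving an Eisenstein degree-$q$ subextension.

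The main obstacle---and the one place where the minimality of $q$ is genuinely used---is ruling out the remaining intermediate possibility $q \mid p^{hf}-1$ with $q\nmid p^h-1$, where neither of the two Kummer-style arguments above directly applies. I would resolve this by a multiplicative-order contradiction: the order $f_0$ of $p^h$ in $(\mathbb{Z}/q\mathbb{Z})^\times$ would satisfy $1 < f_0 \mid f$ (from the two divisibility hypotheses) and $f_0 \mid q-1$ (by Fermat), so $f_0$ would have a prime factor strictly less than $q$ that nonetheless divides $f$, and hence $n$, contradicting $q$'s being the smallest prime factor of $n$. Once this case is ruled out, the two earlier subcases exhaust the possibilities and the proof is complete.
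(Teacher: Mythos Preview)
Your proof is correct and shares the same skeleton as the paper's: both exploit tameness to split into the unramified case $q\mid f$ (trivial) and the totally-ramified-contribution case $q\mid e$, $q\nmid f$, then break the latter into the subcases $q\nmid p^{hf}-1$, $q\mid p^h-1$, and the residual case $q\mid p^{hf}-1$ with $q\nmid p^h-1$, which is dispatched by the same multiplicative-order argument (the order of $p^h$ in $(\mathbb{Z}/q\mathbb{Z})^\times$ would furnish a prime factor of $n$ smaller than $q$).

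The packaging differs in two ways. First, the paper carries out this case analysis only when $n=q\ell$ is a product of two primes and then runs an induction on the number of prime factors: it first shows \emph{some} prime-degree subextension always exists, uses the inductive hypothesis on the quotient to land in a degree-$q\ell$ subextension, and finally applies the two-prime case. You instead run the case analysis once for arbitrary $n$, which eliminates the induction entirely and is cleaner. Second, in the subcase $q\mid p^h-1$ the paper appeals to the abelianness criterion for tame extensions (from Hasse) to get the degree-$q$ subfield, whereas you give an explicit Eisenstein construction: solving $qs\equiv -r\pmod{N}$ with $N=(p^{hf}-1)/(p^h-1)$ to force $\gamma^q=\pi\zeta^{r+qs}$ into $\K$. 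Your computation $N\equiv f\not\equiv 0\pmod q$ is exactly what makes this solvable, and it is the concrete content behind the paper's abelianness invocation. Both routes are valid; yours is more self-contained, the paper's is more structural.
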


\begin{proof}  We remark that \(\LL/\K\) must be tamely ramified, since \(p\nmid n\).  This lets us consider the decomposition
\[\K\subset \K'\subset \LL,\]
where \(\K'\) is unramfied over \(\K\) and \(\LL\) is totally ramified over \(\K'\).  Let's first argue that there exists some field between \(\K\) and \(\LL\) of some prime degree over \(\K\).  If the extension is not totally ramified, then since \(\K'/\K\) is unramified, it must be Galois and cyclic.  Thus between \(\K\) and \(\K'\) we must be able to find fields with degree equal to any given divisor of \([\K':\K]\), including any prime divisor.  Otherwise, the extension \(\LL/\K\) must be totally ramified, so that \(\LL=\K((\pi)^{1/n})\).  This contains intermediate subfields of the form \(\LL=\K((\pi)^{1/d})\) for every divisor \(d\) of \(n\), including prime divisors, giving us an intermediate extension of prime order.   By the existence of some extension of prime degree, we have handled the case that \(n\) is a power of \(q\).  For the remainder we may assume that \(n\) has at least one prime factor besides \(q\).

We next argue that the claim holds when \(n=q\ell\) is the product of two primes,  with \(q<\ell\).  We consider 
\[\K\subset \K'\subset \LL\]
as before.  Since \(\K'\) is Galois over \(\K\), if \(q\) divides \([\K',\K]\) we have a subextension of degree \(q\) and we are done.  Otherwise, we may write \(\K'=\K(\zeta)\) and \(\LL=\K(\zeta,(\pi\zeta^r)^{1/e})\), where \(\zeta\) is a primitive \((p^{hf}-1)st\) root of unity, \(u\) is a unit in the ring of integers of \(\K\), and \(0\leq r<\gcd(p^{hf}-1,e)\).  Consider the subfield \(\tilde{\K}=\K(\zeta,(\pi\zeta^r)^{1/q})\). 
If \(\gcd(p^{hf}-1,q)=1\), then \(\tilde{\mathbb{K}}\) contains \(\K(\pi^{1/q}\)), which has degree \(q\). And if \(q\) divides \(p^h-1\), then \(\tilde{\K}\) is an abelian Galois extension of \(\K\), also leading to the desired extension of degree \(q\).  Thus we are left to handle the case that \(q\) divides \(p^{fh}-1\) but not \(p^h-1\); i.e. that \(p^h\not\equiv 1\mod q\) and  \(p^{hf}=1\mod q\).  In this case we must have \(f=\ell\), so that the multiplicative order of \(p^h\) modulo \(q\)
 is \(\ell\).  But \(\ell>q\), so this is impossible.  So in all cases where \(n\) has two prime factors, we have the intermediate field.  

We now use induction on the number \(r\) of prime factors of \(n\), counted with multiplicity. We have already handled the case with \(r=2\); assume our claim is true for some fixed \(r\geq 2\), and consider \(n\) with \(r+1\) prime factors.  As argued before, there must be \emph{some} subextension \(\LL\supset \LL'\supset \K\) of prime degree \(\ell\) over \(\K\). If \(\ell=q\), we are done.  If not, then \(q\) is the smallest prime dividing \([\LL:\LL']=n/\ell\), so there exists a field extension \(\LL''\) of degree \(q\) over \(\LL'\), and thus of degree \(\ell q\) over \(\K\).  Applying our result from the \(r=2\) case to the extension \(\LL'/\K\) gives us the field extension of degree \(q\) over \(\K\).
\end{proof}

\section{A commuting graph with diameter \(6\)}
\label{section:diameter}

We recall several results from \cite{diameter_commuting} that will be used in our next theorem.

\begin{lemma}\label{lemma:1}[\cite{diameter_commuting}, Lemma 2.3]  Let \(\F\) be a field and let \(X\in M_n(\F)\) with \(\F[X]\) a field.  Then the rational form of \(X\) consists of copies of identical cells.
\end{lemma}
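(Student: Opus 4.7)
The plan is to reduce the claim to two standard observations: that $\F[X]$ being a field forces the minimal polynomial of $X$ to be irreducible over $\F$, and that every invariant factor appearing in the rational form of $X$ divides its minimal polynomial. Combining these with the fact that invariant factors are non-constant will immediately force all of them to coincide.

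First I would write $m_X$ for the minimal polynomial of $X$ and recall the evaluation isomorphism $\F[X] \cong \F[x]/(m_X)$. Since $\F[x]$ is a principal ideal domain, the quotient $\F[x]/(m_X)$ is a field precisely when $m_X$ is irreducible over $\F$. The hypothesis that $\F[X]$ is a field therefore yields that $m_X$ is irreducible.

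Next I would invoke the structure theorem for finitely generated modules over the PID $\F[x]$ (applied to $\F^n$ with the $\F[x]$-action given by $X$) to obtain the invariant factor decomposition $f_1 \mid f_2 \mid \cdots \mid f_k$, with each $f_i$ a non-constant monic polynomial and $f_k = m_X$. The rational canonical form of $X$ is then the block-diagonal matrix whose cells are the companion matrices $C(f_1),\ldots,C(f_k)$. Since each $f_i$ divides the irreducible polynomial $m_X$ and is itself non-constant, we must have $f_i = m_X$ for every $i$, so all cells of the rational form are identical copies of $C(m_X)$. The argument is essentially mechanical once irreducibility of $m_X$ is secured, and I do not foresee any real obstacle.
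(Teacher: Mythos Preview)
Your argument is correct. The paper does not actually prove this lemma; it merely quotes it from \cite[Lemma~2.3]{diameter_commuting}, so there is no in-paper proof to compare against. Your route---observe that $\F[X]\cong\F[x]/(m_X)$ forces $m_X$ irreducible, then use the invariant-factor chain $f_1\mid\cdots\mid f_k=m_X$ to conclude each $f_i=m_X$---is exactly the standard proof one would expect the cited reference to give, and it goes through without issue.
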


\begin{lemma}\label{lemma:2}[\cite{diameter_commuting}, Lemma 2.4] Let \(\F\) be a field with characteristic not equal to \(2\); let \(q\geq 7\) be prime; and let \(C\in M_q(\F)\) be a companion matrix of some irreducible monic separable polynomial \(m(x)\in \F[x]\) so that \(\K:=\F[C]\) is its splitting field.  Then there exists a matrix \(U\in M_q (\F)\) such that \(U(I+U)\) is invertible, \(U^q\in \F I\), \(U\K=\K U\), and \[M_q(\F)=\K +\K U+\K U^2+\cdots + \K U^{q-1}.\]
Moreover, the sum is direct as a sum of left \(\K\)-modules.
\end{lemma}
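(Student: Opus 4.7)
The plan is to construct $U$ as the matrix representing a generator of $\mathrm{Gal}(\K/\F)$ acting $\F$-linearly on $\K$, thereby realizing $M_q(\F)$ as a split cyclic algebra over $\F$.

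First I would observe that since $m(x)$ is irreducible and separable and $\K=\F[C]$ is by hypothesis its splitting field, the extension $\K/\F$ is Galois of prime degree $q$, so its Galois group is cyclic; write it as $\langle\sigma\rangle$. I would view $\K$ as a $q$-dimensional $\F$-vector space with basis $\{1,C,\dots,C^{q-1}\}$ and identify $M_q(\F)$ with $\mathrm{End}_\F(\K)$; under this identification $\K$ embeds as the subring of left-multiplication operators (and $C$ itself corresponds to multiplication by the class of $x$). I would then define $U\in M_q(\F)$ to be $\sigma$ regarded as an $\F$-linear endomorphism of $\K$; its matrix in the chosen basis has entries in $\F$ precisely because $\sigma$ fixes $\F$ pointwise.

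Two of the four conclusions are then immediate from this construction. We have $U^q=\sigma^q=I\in\F I$, and for any $k\in\K$ and $v\in\K$,
\[Uk(v)=\sigma(kv)=\sigma(k)\sigma(v)=\sigma(k)U(v),\]
so $Uk=\sigma(k)U$ as operators, yielding $U\K=\K U$. For invertibility of $U(I+U)$, note that since $U$ has order $q$ its eigenvalues over an algebraic closure are $q$-th roots of unity; as $q\geq 7$ is odd, none of them equals $0$ or $-1$, so both $U$ and $I+U$ are invertible, and thus so is $U(I+U)$.

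The decomposition $M_q(\F)=\K\oplus\K U\oplus\cdots\oplus\K U^{q-1}$ is the one place where real work is required, and is the step I expect to be the main (and essentially only) conceptual obstacle. Since both sides have $\F$-dimension $q^2$, it suffices to show that $I,U,\dots,U^{q-1}$ are left $\K$-linearly independent in $\mathrm{End}_\F(\K)$. If $\sum_{i=0}^{q-1}k_iU^i=0$ with $k_i\in\K$, then evaluating at an arbitrary $v\in\K$ gives $\sum_i k_i\sigma^i(v)=0$ for every $v$, and Artin's theorem on the linear independence of distinct field automorphisms then forces each $k_i=0$, completing the proof.
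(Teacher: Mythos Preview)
Your argument is correct and is precisely the cyclic-algebra realization that underlies the cited result. The paper does not supply its own proof of this lemma; it is quoted verbatim from \cite{diameter_commuting}, so there is no alternative argument here to compare against. One very minor remark: in the invertibility step, the reason $-1$ is not a $q$-th root of unity is that $(-1)^q=-1\neq 1$, which uses both that $q$ is odd \emph{and} that $\operatorname{char}\F\neq 2$; you implicitly rely on the latter hypothesis even though you only mention the former.
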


\begin{lemma}\label{lemma:3}[\cite{diameter_commuting}, Proposition 3.4]  Let \(\F\), \(\K\) and \(U\) be as in Lemma \ref{lemma:2}.  Let $S=\left(\begin{matrix} I& U \\ U&-U^3\end{matrix}\right)\in M_{2q}(\F)$.  If \(2\times 2\) block matrices \(F,G\in M_2(\K)\subset M_{2q}(\F)\) are both nonscalar, then \(F\) and \(S^{-1}GS\) do not commute.
\end{lemma}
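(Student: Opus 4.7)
The plan is to proceed by contraposition: assume $[F, S^{-1}GS]=0$ and deduce that $F$ or $G$ must be scalar in $M_{2q}(\F)$. Since $[F, S^{-1}GS]=0$ is equivalent to $[SFS^{-1},G]=0$, it suffices to set $H := SFS^{-1}$ and show that if a nonscalar $G \in M_2(\K)$ commutes with $H$, then $F$ is scalar.

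The key calculation is an explicit expansion of $H$ in the direct sum basis of Lemma \ref{lemma:2}. All blocks of $S$ lie in the commutative ring $\F[U]$, so computing the ``determinant'' $I\cdot(-U^3)-U\cdot U=-U^2(I+U)=:\Delta$ and using the commuting-block adjugate formula yields $S^{-1} = \Delta^{-1} \begin{pmatrix} -U^3 & -U \\ -U & I \end{pmatrix}$, with $\Delta^{-1} \in \F[U]$ since $U(I+U)$ is a unit there by Lemma \ref{lemma:2} (any non-zero-divisor in a finite-dimensional commutative algebra is a unit). Writing $F = \begin{pmatrix} a & b \\ c & d \end{pmatrix}$ and multiplying out, each of the four $q \times q$ blocks $H_{ij}$ is an explicit polynomial expression in $a,b,c,d,U,\Delta^{-1}$. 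Define the conjugation automorphism $\tau(\lambda) := U\lambda U^{-1}$ on $\K$; it generates $\mathrm{Gal}(\K/\F) \cong C_q$ (nontrivial because $U \notin \K$, as $\K U$ is a nonzero summand in the decomposition of Lemma \ref{lemma:2}, and of full order because $q$ is prime). Using $U\lambda = \tau(\lambda) U$, each $H_{ij}$ admits a unique expansion $H_{ij} = \sum_{k=0}^{q-1} h_{ij}^{(k)} U^k$ with $h_{ij}^{(k)} \in \K$, by the direct sum $M_q(\F) = \bigoplus_{k=0}^{q-1} \K U^k$.

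Now write $G = \begin{pmatrix} \alpha & \beta \\ \gamma & \delta \end{pmatrix}$ with entries in $\K$ and expand $[H,G]=0$. Using $U^k \lambda = \tau^k(\lambda) U^k$ and the uniqueness of the direct-sum expansion, each of the four block equations splits into $q$ scalar relations indexed by $k$: the $k=0$ layer is the ordinary commutation in $M_2(\K)$, while for $k \geq 1$ one sees Galois displacements $\tau^k(\lambda)-\lambda$ that vanish only when $\lambda \in \F$. Substituting the explicit formulas for $h_{ij}^{(k)}$ obtained from the block computation and working through the resulting Galois-twisted linear system, one aims to show that no nonscalar pair $(F,G)$ satisfies all the equations. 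The main obstacle is precisely this final matching step: one must verify that the specific shape of $S$ (with $-U^3$ against $U$ in the other nontrivial entries) forces enough of the $h_{ij}^{(k)}$ with $k \geq 1$ to be nontrivial functions of the entries of $F$ that the Galois-displacement equations over-determine the entries of $G$, driving them all into $\F$. The primality of $q \geq 7$ enters twice: it ensures every nontrivial power $\tau^k$ still has order $q$, so Galois displacements detect any entry of $G$ outside $\F$; and it guarantees the rigidity of the direct-sum decomposition that makes the $k$-indexed equations truly independent.
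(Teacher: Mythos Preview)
The paper does not actually prove this lemma: it is quoted verbatim from \cite{diameter_commuting} (as Proposition~3.4 there) and used as a black box in the proof of Theorem~\ref{theorem:sufficient_condition}. So there is no ``paper's own proof'' to compare against; I can only assess your proposal on its merits.

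Your setup is sound. The contrapositive reduction, the adjugate computation of $S^{-1}$ via the commuting-block formula, the invertibility of $\Delta=-U^2(I+U)$ in $\F[U]$, and the identification of $\tau(\lambda)=U\lambda U^{-1}$ as a generator of $\mathrm{Gal}(\K/\F)\cong C_q$ are all correct and are indeed the natural first moves. Decomposing each block $H_{ij}$ in the graded basis $\bigoplus_k \K U^k$ and then stratifying the commutator equations by $U$-degree is exactly the right structure.

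But what you have written is a plan, not a proof. You say explicitly that ``the main obstacle is precisely this final matching step'' and that ``one aims to show'' the conclusion; neither the coefficients $h_{ij}^{(k)}$ nor the resulting twisted linear system are ever written down or solved. This is the entire content of the lemma: the specific entries $I,U,U,-U^3$ in $S$ are carefully chosen, and it is only in the explicit computation that one sees why this choice works (and, incidentally, why the hypothesis $q\geq 7$ rather than merely $q$ prime is needed---your closing paragraph invokes only primality, which is a symptom of not having done the calculation). There is also a slip in your target: you set out to prove that $F$ is scalar, but in the last paragraph you speak of ``driving \emph{[the entries of $G$]} all into $\F$,'' which is neither the stated goal nor sufficient, since a matrix in $M_2(\F)\subset M_2(\K)$ need not be scalar in $M_{2q}(\F)$. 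To complete the argument you must actually compute $SFS^{-1}$ blockwise, extract the $\K$-coefficients in each $U$-degree, and carry the resulting system through to the conclusion that $F\in\F I_{2q}$ or $G\in\F I_{2q}$.
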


We are now ready to state and prove our sufficient condition for a commuting graph to have diameter \(6\).

\begin{theorem}\label{theorem:sufficient_condition} Let \(p,q\) be prime with \(q\geq 7\).  Suppose that there exists a Galois extension \(\mathbb{L}\) of \(\Q_p\) with Galois group \(G\) such that
\begin{itemize}
\item \(G\) has a subgroup \(H\) and a normal subgroup \(K\);
\item \(H\leq K\), and \(H\) is not contained in any other proper subgroup of \(G\); and
\item $[G:K]=q$ and $[K:H]=2$.
\end{itemize}
Then the diameter of \(\Gamma(\Q_p,2q)\) is \(6\).
\end{theorem}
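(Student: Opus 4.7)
The plan is to translate the group-theoretic hypothesis on $G, H, K$ into a field-theoretic setup where Lemmas \ref{lemma:1}--\ref{lemma:3} apply, then construct matrices $A, B \in M_{2q}(\Q_p)$ with $d(A, B) = 6$, closely following the argument of \cite{diameter_commuting}. Setting $\K := \LL^K$ and $\mathbb{M} := \LL^H$ and invoking the Galois correspondence, since $K \triangleleft G$ has index $q$, the extension $\K/\Q_p$ is Galois of degree $q$ and $[\mathbb{M}:\Q_p] = 2q$; moreover, the hypothesis that $H$ lies in no proper subgroup of $G$ other than $K$ translates to: the only intermediate fields of $\mathbb{M}/\Q_p$ are $\Q_p$, $\K$, and $\mathbb{M}$. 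Let $m(x) \in \Q_p[x]$ be the minimal polynomial of a primitive element $\alpha \in \K$, let $C \in M_q(\Q_p)$ be its companion matrix, and note that $\K \cong \Q_p[C]$ is a splitting field of $m(x)$ by Galois. The hypotheses of Lemma \ref{lemma:2} are thereby met, producing $U \in M_q(\Q_p)$ with the stated properties; from this, form $S := \left(\begin{matrix} I & U \\ U & -U^3 \end{matrix}\right) \in M_{2q}(\Q_p)$ as in Lemma \ref{lemma:3}.

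Next I would construct $A$ with a carefully chosen basis so that the embedding $\K \hookrightarrow M_{2q}(\Q_p)$ induced from $\K \subset \Q_p[A]$ matches the block-scalar embedding used in Lemma \ref{lemma:3}. Fix a primitive element $\beta$ of $\mathbb{M}/\Q_p$ and a $\K$-basis $\{v_1, v_2\}$ of $\mathbb{M}$, and represent multiplication-by-$\beta$ on $\mathbb{M}$ as a matrix $A \in M_{2q}(\Q_p)$ in the ordered $\Q_p$-basis $(v_1, \alpha v_1, \ldots, \alpha^{q-1} v_1, v_2, \alpha v_2, \ldots, \alpha^{q-1} v_2)$. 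Because $\beta$ has minimal polynomial of degree $2q$, $A$ is cyclic with $\Q_p[A] \cong \mathbb{M}$; and a direct computation shows that any $k = f(\alpha) \in \K \subset \Q_p[A]$ acts on this basis as $\left(\begin{matrix} f(C) & 0 \\ 0 & f(C) \end{matrix}\right)$, so the image of $\K$ in $M_{2q}(\Q_p)$ is exactly the block-scalar subalgebra of $M_2(\Q_p[C]) = M_2(\K)$ from Lemma \ref{lemma:3}. Put $B := S^{-1} A S$ and let $\mathcal{C}_A$ (resp.\ $\mathcal{C}_B$) be the centralizer of $\K$ (resp.\ of $S^{-1} \K S$) in $M_{2q}(\Q_p)$. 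Cyclicity of $C$ gives $\mathcal{C}_A = M_2(\K)$ in the Lemma \ref{lemma:3} embedding, and therefore $\mathcal{C}_B = S^{-1} M_2(\K) S$. One notes that $A \neq B$ because $B = A$ would force $S \in C(A) = \Q_p[A] \subset M_2(\K)$, yet Lemma \ref{lemma:2} says $U$ is not in $\Q_p[C] = \K$, so $S$ is not in $M_2(\K)$.

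The key intermediate claim is that every vertex at commuting distance at most $2$ from $A$ lies in $\mathcal{C}_A$, and symmetrically for $B$. Since $A$ is cyclic, $C(A) = \Q_p[A] \cong \mathbb{M}$; any nonscalar neighbor $X$ of $A$ lies in $\mathbb{M}$, and $\Q_p[X]$ is an intermediate field of $\mathbb{M}/\Q_p$, so equals either $\mathbb{M}$ or $\K$ by the structural hypothesis above. If $\Q_p[X] = \mathbb{M}$, then $C(X) = \Q_p[X] = \Q_p[A]$, which lies in $\mathcal{C}_A$ because $\Q_p[A]$ is commutative and contains $\K$. If $\Q_p[X] = \K$, then Lemma \ref{lemma:1} forces the rational canonical form of $X$ to consist of two copies of a degree-$q$ companion cell, so $C(X) \cong M_2(\K)$, and by the chosen embedding this coincides with $\mathcal{C}_A$. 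Hence every vertex at distance $\leq 2$ from $A$ lies in $\mathcal{C}_A$.

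To finish, suppose for contradiction that $d(A, B) \leq 5$; since $A \neq B$, a shortest chain $A = Y_0 - Y_1 - \cdots - Y_m = B$ has length $m \in \{1, 2, 3, 4, 5\}$. A direct case analysis on $m$ produces nonscalar vertices $P, Q$ on this chain that commute, with $P$ at distance $\leq 2$ from $A$ and $Q$ at distance $\leq 2$ from $B$: take $P = Q = Y_{m/2}$ for even $m$ and $(P, Q) = (Y_{(m-1)/2}, Y_{(m+1)/2})$ for odd $m$. The key claim then gives $P \in \mathcal{C}_A = M_2(\K)$ and $Q \in \mathcal{C}_B = S^{-1} M_2(\K) S$; writing $Q = S^{-1} G S$ with $G \in M_2(\K)$ nonscalar and taking $F := P$ in Lemma \ref{lemma:3} forces $PQ \neq QP$, contradicting $P$ and $Q$ being adjacent (or equal) in the commuting graph. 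Hence $d(A, B) \geq 6$; combined with the universal diameter bound of $6$ for connected commuting graphs \cite{diameter_6} and connectivity of $\Gamma(\Q_p, 2q)$ via Theorem \ref{theorem:connectivity_qp} (since $2q$ is composite and, for $q \geq 7$ an odd prime, not a power of $p$), this yields $d(A, B) = 6$. The main obstacle I expect is the explicit basis-level construction of $A$ making the embedding of $\K$ into $M_{2q}(\Q_p)$ coincide \emph{on the nose} with the one in Lemma \ref{lemma:3}; with only a conjugate embedding, one would have to carry a conjugation through Lemma \ref{lemma:3}'s conclusion, which is routine but clutters the argument.
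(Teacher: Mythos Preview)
Your proposal is correct and follows essentially the same approach as the paper: translate the group hypothesis via Galois correspondence into the field tower $\Q_p\subset\K\subset\mathbb{M}$ with $\K$ the unique intermediate field, realize $\mathbb{M}$ as $\Q_p[A]$ for a nonderogatory $A\in M_{2q}(\Q_p)$, invoke Lemmas~\ref{lemma:2} and~\ref{lemma:3}, and argue that the radius-$2$ neighborhoods of $A$ and $B=S^{-1}AS$ lie in $M_2(\K)$ and $S^{-1}M_2(\K)S$ respectively. The only cosmetic difference is that the paper takes $A$ to be a companion matrix and then conjugates by $T$ to align the copy of $\K$ with the block-scalar embedding, whereas you achieve the alignment up front by choosing the $\Q_p$-basis $(\alpha^i v_j)$; your own closing remark about this ``main obstacle'' is exactly the step the paper handles via the conjugation $A_1=TAT^{-1}$.
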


Our proof follows that of \cite[Theorem 4.2]{diameter_commuting}. As previously, let \(C_n\) denote the cyclic group of order \(n\), and let \(\mathcal{C}_{M_n(\F)}(A)\) denote the commutator of \(A\), i.e. the set of matrices in \(M_n(\F)\) that commute with \(A\).

\begin{proof} Since \(q\geq 7\), we know that \(2q\) is neither prime nor a power of \(p\), so \(\Gamma(\Q_p,2q)\) is connected by Theorem \ref{theorem:connectivity_qp}.  By \cite[Theorem 17]{diameter_6}, we know the diameter of \(\Gamma(\Q_p,2q)\) is at most \(6\), so it suffices to prove that there exists a pair of matrices with distance \(6\).

Following the Fundamental Theorem of Galois Theory, let \(\mathbb{H}\) be the fixed field of \(H\) and \(\mathbb{K}\) be the fixed field of \(K\). By our subgroup assumptions, we know that \(\mathbb{K}\) is the unique intermediate field between \(\mathbb{H}\) and \(\Q_p\).  Moreover, since \(K\) is normal in \(G\), we know that \(\mathbb{K}\) is a Galois extension of \(\Q_p\); and the Galois group of \(\K\) over \(\Q_p\) has order \([G:K]=q\), and thus must be \(C_q\).

Since \(\Q_p\) has characteristic \(0\), every finite extension is a simple extension by the primitive element theorem. This lets us write \(\HH=\mathbb{Q}_p(\alpha)\) for some \(\alpha\in\mathbb{Q}_p\).  Let \(m_\alpha(x)\in\mathbb{Q}_p[x]\) be the minimal polynomial of \(\alpha\) over $\mathbb{Q}_p$, and let \(A\in M_{2q}(\Q_p)\) be the companion matrix of \(m_\alpha(x)\).  Since \(m_\alpha(x)\) is irreducible over $\mathbb{Q}_p$, we know \(\mathbb{Q}_p[A]\) is a field, and it follows from the Cayley-Hamilton Theorem that \(\mathbb{Q}_p[A]\) is isomorphic to \(\HH\).

Since \(m_\alpha(x)\) is the minimal polynomial of a matrix, namely \(A\), we know it must be separable, so it has no repeated zeros in the algebraic closure \(\overline{\mathbb{Q}}_p\).  Thus \(A\) is nonderogatory, implying by \cite[\S 2.08]{derogatory} that
\[\mathcal{C}_{M_{2q}(\Q_p)}(A)=\Q_p[A].\]
Since \(\Q_p[A]\) is a finite field extension of \(\Q_p\), every element \(X\in \mathcal{C}_{M_{2q}(\Q_p)}(A)\) is algebraic, so \(\Q_p[X]\) is a subfield of \(\Q_p[A]\) containing \(\Q_p\).  By the isomorphism with \(\HH\), there are three such fields:  \(\Q_p[A]\), \(\Q_p\), and the unique intermediate field. Moreover, by the primitive element theorem, any subfield of \(\Q_p[A]\) is of the form \(\Q_p[X]\).  Choose \(X\in \mathcal{C}_{M_{2q}(\Q_p)}(A)\) so that \(\Q_p[X]\) is the intermediate field, with \([\Q_p[X]:\Q_p]=q\).
By Lemma \ref{lemma:2}, the rational form of \(X\) consists of two identical \(q\times q\) cells.  We may then write \(X\) in its rational form as
\[X=T^{-1} (C \oplus C)T,\]
where \(C\in M_2(\Q_p)\).

Note that \(C\) is the companion matrix of the minimal polynomial of \(X\), and that \(\Q_p[X]\cong \Q_p[C]\subset M_q(\Q_p)\).    Since \(\K\) is Galois and \([\K:\Q_p]=q\) is the size of the matrix \(C\), we know that \(\K\) must be the splitting field for the minimal polynomial of \(C\).  This lets us apply Lemma \ref{lemma:2} with \(\F=\Q_p\) and our \(C\).

From here, we may let \(S\) be as in Lemma \ref{lemma:3}.  We will show that the distance between the matrices \(A_1=TAT^{-1}\) and \(B_1=S^{-1}A_1S\) is at least \(6\). Note that both \(A_1\) and \(B_1\) are nonderogatory.  Consider the shortest path between \(A_1\) and \(B_1\):  \[A_1=X_0\-- X_1\-- X_2\-- \cdots \-- X_{m-2}\-- X_{m-1}\-- X_m=B_1.\]
Since \(X_1\in\mathcal{C}_{M_{2q}(\Q_p)}(A_1)=\Q_p[A_1]=T\Q_p[A]T^{-1}\) is nonscalar, either \(\Q_p[X_1]=\Q_p[A_1]\) or \(\Q_p[X_1]=T\Q_p[X]T^{-1}=\Q_p[C\oplus C]\).  In the first case, \(\mathcal{C}_{M_{2q}(\Q_p)}(X_1)=\mathcal{C}_{M_{2q}(\Q_p)}(A_1)\); then \(X_2\) commutes with \(A_1\), meaning our path was not the shortest.  Thus we must be in the second case, so we have \(\mathcal{C}_{M_{2q}(\Q_p)}(X_1)=\mathcal{C}_{M_{2q}(\Q_p)}(C\oplus C)\).  Similarly, without loss of generality we have \(X_{m-1}=S^{-1}(C\oplus C)S\in \Q_p[B_1]\).  By Lemma \ref{lemma:3}, the matrices \(X_1\) and \(X_{m-1}\) do not commute, meaning \(d(A_1,B_1)\) is at least \(4\).  By Lemma \ref{lemma:3}, since \(X_2\in \mathcal{C}_{M_{2q}(\mathbb{Q}_p)}(X_1)\) we have \(X_2\in M_2(\Q_p[C])\); and similarly \(X_{m-2}\in S^{-1}M_2(\Q_p[C])S\).  Moreover, \(X_1\in M_2(\Q_p[C])\) and \(X_{m-1}\in S^{-1}M_2(\Q_p[C]) S\).  Thus by Lemma \ref{lemma:3} neither matrix from \(\{X_1,X_2\}\) commutes with either matrix from \(\{X_{m-2},X_{m-1}\}\), meaning that \(d(A_1,B_1)>5\).  This completes the proof.
\end{proof}

As a concrete application of this result, consider the wreath product \(C_2\wr C_7\), defined as the semidirect product \(C_2^7\rtimes C_7\) where \(C_7\) acts on \(C_2^7\) by cyclic rotation.  As shown in \cite[Lemma 2.5]{diameter_commuting}, \(C_2\wr C_7\) satisfies the group-theoretic hypotheses of Theorem \ref{theorem:sufficient_condition}.  Moreover, there exists a Galois extension \(\LL\) of \(\Q_2\) with Galois group \(C_2\wr C_7\), namely the Galois closure of \(\Q_2(\alpha)\) where \(\alpha\) is a root of
\[x^{14}-x^{12}+2x^{11}+2x^{10}+2x^4+2x^3+1;\]
see \cite[$p$-adic field 2.14.14.13]{lmfdb}.  Thus we may apply Theorem \ref{theorem:sufficient_condition} with \(p=2\) and \(q=7\) to conclude that \(\Gamma(\Q_2,14)\) has diameter \(6\).

We now prove that this example generalizes when we replace \(7\) with any prime \(q\geq 7\).

\begin{proof}[Proof of Theorem \ref{theorem:q2_14}] Let \(q\geq 7\) be prime. First we will prove that there exists a degree \(2q\) extension \(\HH\) of \(\Q_2\) containing precisely one intermediate subfield, which has degree \(q\). In particular, \(\HH\) will have residue degree \(f=q\) and ramification index \(e=2\). We can count the number of extensions of \(\Q_2\) with this structure using Krasner's lemma (proved in \cite{krasner}; see also \cite{enumeration}), and find that there are \(2^{q+2}-2\) such extensions. Every one of these fields will contain the unique unramified extension \(\K/\Q_2\) of degree \(q\). Note that none of the degree \(2q\) extensions can contain another subfield of degree \(q\):  if they did they would contain the compositum of the two distinct degree \(q\) subfields, which would have degree \(q^2>2q\).  Similarly, none can contain more than one quadratic subfield:  if one did, it would have to contain the degree \(4\) compositum of the two fields, but \(4\nmid 2q\).  Finally, if \(\HH\) does contain a quadratic subfield, by the Tower Law it would have to be equal to the compositum of \(\K\) and that quadratic subfield, meaning that no two \(\HH\) may contain the same quadratic subfield.  Since there are \(2^{q+2}-2\) choices for \(\K\) and only \(7\) quadratic extensions of \(\Q_2\), we conclude that at least one choice of \(\HH\) has no quadratic subfield, and thus no subfields besides \(\K\).

Now let \(\LL\) be the Galois closure of \(\HH\) over \(\mathbb{Q}_2\), with Galois group \(G\).  Since \(\K/\mathbb{Q}_p\) is a Galois extension, it corresponds to a normal subgroup \(K\) of index \(q\) in \(G\).  Since \(\HH\) is a degree \(2\) extension of \(\K\) containing no other subfields besides \(\Q_2\), it corresponds to a subgroup \(H\leq K\leq G\) with \(H\) of index \(2\) in \(K\), such that no other proper subgroup of \(G\) contains \(H\).  Thus the hypotheses of Theorem \ref{theorem:sufficient_condition} are met, and we can conclude that the diameter of \(\Gamma(\Q_2,2q)\) is \(6\).
\end{proof}

\section{Possible diameters over \(\Q_p\)}\label{section:possible_diameters}

Having established that \(\Gamma(\Q_2,2q)\) has diameter \(6\) for $q\geq 7$ a prime, we turn to the question of what other diameters are possible for connected \(\Gamma(\Q_p,n)\).  The first two results are quick applications of known theorems.

\begin{proposition}\label{prop:diameter_four} For \(p\neq 2\), the diameter of \(\Gamma(\Q_p,4)\) is \(4\).
\end{proposition}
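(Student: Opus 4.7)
My plan has three steps, the first two immediate from material already set up. First, Theorem \ref{theorem:connectivity_qp} gives connectivity: since $p\neq 2$, the integer $n=4$ is neither prime nor a power of $p$. Second, the result of \cite{diameter_6} cited in the introduction states that any connected commuting graph of $n\times n$ matrices with $n\geq 3$ has diameter in $\{4,5,6\}$, yielding the lower bound $\mathrm{diam}\,\Gamma(\Q_p,4)\geq 4$ for free.

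The real content is the upper bound $\mathrm{diam}\,\Gamma(\Q_p,4)\leq 4$. The key structural input is Lemma \ref{lemma:qp_intermediate}, which tells us that $\Q_p$ admits no primitive degree-$4$ extension. I would translate this into a statement about matrices: for any non-scalar $A\in M_4(\Q_p)$, either the minimal polynomial $m_A$ is reducible---in which case $\Q_p^4$ decomposes into proper $A$-invariant subspaces---or $m_A$ is irreducible of degree dividing $4$, and then the commutative subalgebra $\Q_p[A]$ contains a proper intermediate subfield of degree $2$ over $\Q_p$. In either case, $A$ commutes with a non-scalar matrix $B$ whose rational canonical form over $\Q_p$ consists of two $2\times 2$ blocks (or smaller blocks summing to size $4$). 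I would then invoke the standard commuting-graph techniques underlying the diameter-$4$ results for algebraically closed fields and $\mathbb{R}$ in \cite{diameter_6,real1,real2,real3}---which produce length-$4$ paths between any two non-scalar matrices once each endpoint is known to commute with such a \emph{block-structured} matrix---to conclude.

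The main obstacle I anticipate is the non-derogatory quartic case, where $A$ itself admits no proper invariant subspace of $\Q_p^4$. The remedy is to pass first to a generator of the degree-$2$ subfield inside $\Q_p[A]$, which does split $\Q_p^4$ as a direct sum of two $2$-dimensional pieces over that subfield; this lands us back in the reducible case and costs at most one extra step in the path, keeping us within the overall bound of $4$.
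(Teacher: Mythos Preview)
The paper's proof is far shorter than your plan: after noting connectivity via Theorem~\ref{theorem:connectivity_qp}, it simply cites a result from \cite{diameter_finite_fields} stating that \(\Gamma(\F,4)\) has diameter exactly \(4\) for \emph{any} field \(\F\) whenever it is connected. No \(p\)-adic input beyond connectivity is used, and in particular Lemma~\ref{lemma:qp_intermediate} plays no role.

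Your plan instead tries to re-derive this upper bound for \(\Q_p\) by hand, and the crucial middle step has a gap. You write that once each endpoint commutes with a block-structured matrix (two \(2\times 2\) blocks), the ``standard commuting-graph techniques'' of \cite{diameter_6,real1,real2,real3} produce a length-\(4\) path. But those papers treat algebraically closed fields and \(\mathbb{R}\), and their arguments hinge on field-specific facts (every matrix has an eigenvector, or every real polynomial has an irreducible factor of degree at most \(2\)); they do not give you, over a general field or over \(\Q_p\), the statement that two matrices similar to \(C_1\oplus C_1\) and \(C_2\oplus C_2\) with \(C_i\in M_2(\Q_p)\) are at distance \(\leq 2\). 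That is precisely what you need for your count \(1+2+1=4\) to close. The closest general tool in the paper is \cite[Lemma~3.1]{diameter_finite_fields}, invoked in the proof of Proposition~\ref{prop:sqrt_n}, but it only yields \(d(X,Y)\leq 3\) and requires \(d_1d_2<n\), which fails here since \(2\cdot 2=4=n\). As written, then, your outline delivers only \(\mathrm{diam}\leq 5\).

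The fix is not to repair the step but to cite the right source: the \(n=4\) case is already handled over arbitrary fields in \cite{diameter_finite_fields}, which is exactly what the paper does.
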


\begin{proof} By Theorem \ref{theorem:connectivity_qp}, we  know the graph is connected since \(p\neq 2\).  By \cite{diameter_finite_fields}, we have that  \(\Gamma(\F,4)\) over any field \(\F\) has diameter \(4\) as long as it is connected.
\end{proof}


\begin{proposition}\label{prop:square_of_prime}  Let \(p,q\) be prime with \(p\neq q\) and \( q\geq 3\), and let \(n=q^2\).  The diameter of \(\Gamma(\Q_p,n)\) is at least \(5\).
\end{proposition}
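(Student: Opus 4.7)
The plan is to construct two matrices $A, B \in M_{q^2}(\Q_p)$ realizing commuting distance at least $5$. Connectivity of $\Gamma(\Q_p, q^2)$ follows from Theorem \ref{theorem:connectivity_qp}, since $p \neq q$ and $q \geq 3$ imply $q^2$ is neither prime nor a power of $p$. Let $\HH$ be the unique unramified extension of $\Q_p$ of degree $q^2$; as $\HH/\Q_p$ is cyclic Galois of order $q^2$, it contains exactly one proper nontrivial intermediate subfield $\K$, of degree $q$ over $\Q_p$. Choose a primitive element $\alpha \in \HH$ with minimal polynomial $m_\alpha(x) \in \Q_p[x]$, and take $A \in M_{q^2}(\Q_p)$ to be its companion matrix, so that $A$ is nonderogatory with $\mathcal{C}_{M_{q^2}(\Q_p)}(A) = \Q_p[A] \cong \HH$.

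The next step is to analyze the possible commutator structures. For any nonscalar $Y \in \mathcal{C}(A)$, the field $\Q_p[Y]$ is a subfield of $\Q_p[A] \cong \HH$, hence equals either $\Q_p[A]$ or the copy of $\K$ inside $\Q_p[A]$. In the first case $\mathcal{C}(Y) = \mathcal{C}(A)$, so any shortest path using $Y$ can be shortened. In the second case, Lemma \ref{lemma:1} yields that $Y$'s rational form consists of $q$ identical $q \times q$ cells, so $\mathcal{C}(Y)$ is a $\Q_p$-subalgebra of $M_{q^2}(\Q_p)$ isomorphic to $M_q(\K)$, of $\Q_p$-dimension $q^3$. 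Crucially, because there is only one copy of $\K$ inside $\Q_p[A]$, the centralizer $\mathcal{C}(Y)$ is the same subalgebra $\mathcal{M}_A$ for every such $Y$.

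With these facts in hand, set $B := S^{-1} A S$ for an invertible matrix $S \in GL_{q^2}(\Q_p)$ yet to be specified, so that by the analogous analysis $\mathcal{C}(Y') = S^{-1}\mathcal{M}_A S$ for every nonscalar $Y' \in \mathcal{C}(B)$ with $\Q_p[Y'] \cong \K$. Suppose for contradiction that $d(A,B) \leq 4$, with a shortest path $A - X_1 - X_2 - X_3 - B$. By the structure just described and by minimality of the path, we may assume both $\Q_p[X_1]$ and $\Q_p[X_3]$ are isomorphic to $\K$, forcing the middle vertex to satisfy $X_2 \in \mathcal{M}_A \cap S^{-1} \mathcal{M}_A S$. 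If we can choose $S$ so that this intersection contains only scalar matrices, then no nonscalar $X_2$ exists and we conclude $d(A,B) \geq 5$.

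The main obstacle is therefore producing such an $S$. The subspaces $\mathcal{M}_A$ and $S^{-1}\mathcal{M}_A S$ are each $q^3$-dimensional inside the $q^4$-dimensional space $M_{q^2}(\Q_p)$, and the naive intersection bound $2q^3 - q^4 = q^3(2-q)$ is negative for $q \geq 3$, so on dimensional grounds generic pairs of such conjugate subspaces should meet only along the $1$-dimensional scalar line forced to lie in both. To turn this into a proof one must either make the genericity rigorous (using that $\Q_p$-points are Zariski dense in $GL_{q^2}$ and that the locus where the intersection jumps in dimension is a proper Zariski-closed subvariety) or exhibit an explicit $S$—for example a conjugation by a matrix designed to maximally disrupt the block structure respected by $\mathcal{M}_A$—and verify the trivial intersection directly. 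This existence or construction of $S$ is the most delicate step in the argument.
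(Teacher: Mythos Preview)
Your outline is essentially the skeleton of the proof of \cite[Theorem 3.2]{diameter_5_at_least}, which the paper simply invokes as a black box: that theorem states that for any infinite field $\F$ admitting a cyclic Galois extension of degree $n=q^2$ (with $q\geq 3$ prime) such that $\Gamma(\F,n)$ is connected, the diameter is at least $5$. The paper's own proof consists of verifying these two hypotheses---connectivity via Theorem~\ref{theorem:connectivity_qp}, and the cyclic extension via the unique unramified extension of $\Q_p$ of degree $q^2$---and nothing more.

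You have correctly reconstructed the structural setup (the unique intermediate field $\K$, the identification of $\mathcal{C}(Y)$ with a fixed subalgebra $\mathcal{M}_A\cong M_q(\K)$, and the reduction of $d(A,B)\leq 4$ to the existence of a nonscalar element in $\mathcal{M}_A \cap S^{-1}\mathcal{M}_A S$). But the argument is genuinely incomplete at the point you yourself label ``the most delicate step'': you do not produce any $S$ for which this intersection is scalar. The dimension heuristic $2q^3-q^4<0$ is only suggestive---conjugates of $\mathcal{M}_A$ are not generic linear subspaces, and the scalar line already forces a nontrivial intersection, so one must actually show the bad locus in $GL_{q^2}$ is \emph{proper}. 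That, in turn, requires exhibiting at least one good $S$ (over $\Q_p$ or over $\overline{\Q_p}$ followed by a density argument, which itself needs justification). The cited reference \cite{diameter_5_at_least} carries out exactly such a construction; without it, what you have written is a correct reduction but not yet a proof.
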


\begin{proof}  By \cite[Theorem 3.2]{diameter_5_at_least}, this result holds for any infinite field \(\F\) such that \(\Gamma(\F,n)\) is connected, and such that \(\F\) admits a Galois extension of degree \(n\) with a cyclic Galois group.  Since \(n\) is neither prime nor a power of \(p\), the graph is connected; and \(\Q_p\) admits such an extension, namely the unique unramified extension of degree \(n\).
\end{proof}

We present one more lower bound on the diameter of $\Gamma(\mathbb{Q}_p,n)$.  Let $\mathbb{F}_p$ denote the finite field with $p$ elements.

\begin{proposition}\label{prop:lower_bound_mod_p}
        Let $p$ be a prime, and let $\mathbb{F}_p$ be the field with $p$ elements.  Then 
    \[\textrm{diam}(\Gamma(\mathbb{Q}_p,n))\geq \textrm{diam}(\Gamma(\mathbb{F}_p,n))).\]
\end{proposition}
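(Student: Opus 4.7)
The plan is to use reduction modulo $p$ to transport a shortest walk in $\Gamma(\Q_p,n)$ between chosen lifts down to a walk in $\Gamma(\F_p,n)$. First I would dispose of the case $\textrm{diam}(\Gamma(\F_p,n))=\infty$ by showing that $\Gamma(\Q_p,n)$ must then also be disconnected. If $n=2$ this is automatic, since $\Gamma(\F,2)$ is disconnected for any field $\F$. For $n\geq 3$, disconnectedness of $\Gamma(\F_p,n)$ forces via Theorem~\ref{theorem:connected_by_fields} a primitive degree-$n$ extension of $\F_p$; but the only such extension is $\F_{p^n}$, whose Galois group is cyclic, so $n$ must be prime, and Theorem~\ref{theorem:connectivity_qp} then gives disconnectedness of $\Gamma(\Q_p,n)$.

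Now assume $d:=\textrm{diam}(\Gamma(\F_p,n))$ is finite, and pick nonscalar $A,B\in M_n(\F_p)$ with $d_{\F_p}(A,B)=d$. Take any entrywise lifts $\tilde A,\tilde B\in M_n(\mathbb{Z}_p)\subset M_n(\Q_p)$; these are nonscalar because $A$ and $B$ are. Let $\tilde A=X_0-X_1-\cdots-X_{d'}=\tilde B$ be a shortest path in $\Gamma(\Q_p,n)$, so that $d'\leq\textrm{diam}(\Gamma(\Q_p,n))$. The goal reduces to showing $d\leq d'$.

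The central construction is to produce, for each $i$, a matrix $Z_i\in M_n(\mathbb{Z}_p)\cap\Q_p[X_i]$ that is nonscalar and whose reduction $\overline{Z_i}\in M_n(\F_p)$ is also nonscalar. To build $Z_i$, first scale $X_i$ by a power of $p$ so that it lies in $M_n(\mathbb{Z}_p)$ with at least one entry a $p$-adic unit; if the resulting reduction is a scalar $c_0 I\in M_n(\F_p)$, replace $X_i$ by $(X_i-\tilde{c_0}I)/p$ for any lift $\tilde{c_0}\in\mathbb{Z}_p$ of $c_0$, and iterate. The procedure must terminate, for otherwise $p$-adic completeness would express $X_i$ itself as a scalar matrix, contradicting its nonscalarity. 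Because $Z_i\in\Q_p[X_i]$ and the original path satisfies $[X_i,X_{i+1}]=0$, consecutive $Z_i$'s commute in $M_n(\Q_p)$; and since $\tilde A,\tilde B$ already meet the reduction hypothesis, we may take $Z_0=\tilde A$ and $Z_{d'}=\tilde B$.

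The sequence $A=\overline{Z_0},\overline{Z_1},\ldots,\overline{Z_{d'}}=B$ is then a string of nonscalar matrices with consecutive entries commuting in $M_n(\F_p)$, so after deleting any consecutive duplicates it yields a walk in $\Gamma(\F_p,n)$ from $A$ to $B$ of length at most $d'$, forcing $d\leq d'$ as required. I expect the main technical point to be the scalar-stripping construction: one must simultaneously verify termination, preservation of nonscalarity, and the membership $Z_i\in\Q_p[X_i]$, the last of these being what keeps the commuting relations of the original path valid among the $Z_i$.
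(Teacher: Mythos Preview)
Your proposal is correct and follows essentially the same approach as the paper: lift matrices from $\F_p$ to $\Q_p$, take a commuting chain over $\Q_p$, apply the ``subtract the scalar part and divide by $p$'' stripping procedure to each $X_i$ to force a nonscalar reduction, and push the chain back down modulo $p$. The only notable difference is in the handling of the disconnected case: the paper simply observes that if $\textrm{diam}(\Gamma(\Q_p,n))=\infty$ the inequality is automatic, whereas you instead argue that disconnectedness of $\Gamma(\F_p,n)$ forces disconnectedness of $\Gamma(\Q_p,n)$---your route works but is more effort than needed, and you should also remark (trivially) that if no path from $\tilde A$ to $\tilde B$ exists in $\Gamma(\Q_p,n)$ then $d'=\infty\geq d$.
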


It is worth remarking that this inequality can be strict.  By \cite[Theorem 3.3]{diameter_finite_fields}, if $n$ is neither prime nor the square of a prime, then $\Gamma(\mathbb{F}_p,n)$ has diameter at most $5$.  So, $\Gamma(\mathbb{F}_2,14)$ has diameter at most $5$, while $\Gamma(\mathbb{Q}_2,14)$ has diameter $6$.

\begin{proof}
    Let $d=\textrm{diam}(\Gamma(\mathbb{Q}_p,n))$.  Suppose for the sake of contradiction that $\textrm{diam}(\Gamma(\mathbb{Q}_p,n))<\textrm{diam}(\Gamma(\mathbb{F}_p,n)))$ (which rules out $d=\infty$).  Let $\overline{A}$ and $\overline{B}$ be $n\times n$ non-scalar matrices with entries in $\mathbb{F}_p$ such that $d(\overline{A},\overline{B})>d$ when measured on $\Gamma(\mathbb{F}_p,n))$.  Identifying the elements of $\mathbb{F}_p$ with $\{0,1,\ldots,p-1\}$,  we may set $A,B\in M_n(\mathbb{Q}_p)$ to have the same entries as $\overline{A}$ and $\overline{B}$; by construction these are non-scalar.  Since $d=\textrm{diam}(\Gamma(\mathbb{Q}_p,n))$, there exists a chain of non-scalar matrices
    \[A=X_0-X_1-\cdots-X_{d-1}-X_d=B\]
    where each adjacent pair of matrices commutes\footnote{If $d(A,B)<d$, we can still get a chain of length exactly $d$ by adding repeated copies of matrices.}.  Since $X_1$ is a non-zero matrix, we can redefine $X_1:=p^kX_1$ for some integer $k$ so that the minimum valuation of any entry is $0$, yielding matrices whose entries are $p$-adic integers
    \[a_0+a_1p+a_2p^2+\cdots,\]
    at least one of which has $a_0\neq 0$.  Let $\overline{X_1}$ denote the reduction of $X_1$ modulo $p$.

If $\overline{X_1}$ is non-scalar, leave $X_1$ as is.  If $\overline{X_1}$ is scalar, then each off-diagonal entry $a_0+a_1p+a_2p^2+\cdots$ has $a_0=0$, and each diagonal-entry has leading term $a_0=\lambda$ for some fixed non-zero $\lambda$.  In this case, redefine $X_1:=\frac{1}{p^\ell}(X_1-\lambda I)$, where $\ell\geq 1$ is chosen so that the minimum valuation over all entries of our new $X_1$ is $0$.  If $\overline{X_1}$ is non-scalar, we leave $X_1$ as is; otherwise we repeat this process.

Eventually, this process will terminate, either when a non-zero off-diagonal entry is scaled down to have valuation $0$, or when two non-equal diagonal entries have enough terms cancelled that they have different leading terms.  Thus we end with a choice of $X_1$ such that $\overline{X_1}$ is non-scalar, and since our alterations do not affect commutativity, we still have that $X_1$ commutes with $X_0$ and with $X_2$.

Perform the same process for $X_2,\ldots,X_{d-1}$.  Thus we have a commuting chain
\[A=X_0-X_1-\cdots-X_{d-1}-X_d=B\]
such that $\overline{X_i}$ is non-scalar for all $i$.

Note that if two matrices $C$ and $D$ with $p$-adic integer entries commute, so too must $\overline{C}$ and $\overline{D}$ over $\mathbb{F}_p$, as reduction modulo $p$ is a ring homomorphism from the $p$-adic integers to $\mathbb{F}_p$, and so preserves the addition and multiplication operations that dictate commutativity.  It follows that 
\[\overline{A}=\overline{X_0}-\overline{X_1}-\cdots-\overline{X_{d-1}}-\overline{X_d}=\overline{B}\]
is a commuting chain of non-scalar matrices over $\mathbb{F}_p$.  Thus we have $d(\overline{A},\overline{B})\leq d$, a contradiction.

\end{proof}

We now turn to upper bounds. The following proposition, whose proof closely follows that of Theorem 3.3 from \cite{diameter_finite_fields}, will allow us to deduce that many commuting graphs over the \(p\)-adics have diameter at most \(5\).

\begin{proposition}\label{prop:sqrt_n} Let \(p\) and \(n\) be such that \(\Gamma(\Q_p,n)\) is connected and every extension \(\K/\Q_p\) of degree \(n\) has a subextension of degree strictly less than \(\sqrt{n}\).   Then the diameter of \(\Gamma(\Q_p,n)\) is at most \(5\).
\end{proposition}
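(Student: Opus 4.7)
The plan is to follow the proof of Theorem 3.3 in \cite{diameter_finite_fields}, which establishes the analogous statement over finite fields using essentially the same hypothesis on subextensions. The goal is to show that for any non-scalar $A, B \in M_n(\Q_p)$, there is a commuting chain of length at most $5$ joining them.

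The first step is to show that for each non-scalar $A$ there exists a non-scalar $A' \in \mathcal{C}_{M_n(\Q_p)}(A)$ with $\Q_p[A']$ a field of degree $d$ strictly less than $\sqrt{n}$. The only case in which this requires the hypothesis is when $A$ is non-derogatory with irreducible minimal polynomial of degree $n$: then $\mathcal{C}_{M_n(\Q_p)}(A) = \Q_p[A]$ is itself a degree-$n$ field extension of $\Q_p$, and the hypothesis supplies a proper intermediate subfield $\mathbb{F} \subsetneq \Q_p[A]$ of degree $d < \sqrt{n}$, of which we take a primitive element as $A'$. If instead $A$ is derogatory or has reducible minimal polynomial, then its primary/rational-form decomposition directly yields a non-scalar element of $\mathcal{C}_{M_n(\Q_p)}(A)$ whose $\Q_p$-algebra is structurally simple of small degree, handled by an elementary case analysis.

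By Lemma \ref{lemma:1}, the rational form of $A'$ consists of $n/d$ identical cells; hence $\mathcal{C}_{M_n(\Q_p)}(A') \cong M_{n/d}(\mathbb{F})$ is a matrix algebra over $\mathbb{F}$ of size $n/d > \sqrt{n}$, well in excess of the degree $d$ of $\mathbb{F}$. Performing the same construction on $B$ yields $B'$ with $\mathcal{C}_{M_n(\Q_p)}(B') \cong M_{n/e}(\mathbb{F}')$ for a field $\mathbb{F}'$ of degree $e < \sqrt{n}$. It then suffices to produce a commuting chain
\[A' \; - \; X \; - \; Y \; - \; B'\]
of length $3$ between $A'$ and $B'$, which combined with the edges $A - A'$ and $B - B'$ gives the desired chain of length at most $5$.

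The main obstacle is the construction of $X$ and $Y$. The natural choice is to take $X \in \mathcal{C}_{M_n(\Q_p)}(A')$ to be a non-scalar block idempotent arising from the matrix-algebra structure of $\mathcal{C}_{M_n(\Q_p)}(A')$, so that $\mathcal{C}_{M_n(\Q_p)}(X)$ decomposes as a direct sum of two matrix algebras over $\Q_p$. One then locates $Y$ as a non-scalar element of $\mathcal{C}_{M_n(\Q_p)}(X) \cap \mathcal{C}_{M_n(\Q_p)}(B')$ via a dimension count, with the inequalities $d, e < \sqrt{n}$ providing the slack needed for this intersection to be forced to contain a non-scalar matrix. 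Carrying out the required case analysis uniformly---possibly tailoring the rank of $X$ to the pair $(d, e)$, or making a more subtle choice of $X$ in the tightest cases where the naive block idempotent does not suffice---is the technical core, and follows \cite{diameter_finite_fields} essentially verbatim with $\Q_p$ replacing the finite base field throughout.
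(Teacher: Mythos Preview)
Your proposal is correct and follows essentially the same approach as the paper. Both adapt the proof of \cite[Theorem 3.3]{diameter_finite_fields}: reduce to the case where $\Q_p[A]$ and $\Q_p[B]$ are degree-$n$ field extensions, invoke the hypothesis to find intermediate elements $X,Y$ of degrees $d_1,d_2<\sqrt{n}$, and then use $d_1d_2<n$ together with \cite[Lemma 3.1]{diameter_finite_fields} to obtain $d(X,Y)\le 3$; the paper dispatches the remaining (non-field) cases by citing \cite[Remark 3.4]{diameter_finite_fields}, which is the content of your ``elementary case analysis.''
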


\begin{proof} Let \(A,B\in M_n(\Q_p)\).  Assume first that \(A\) and \(B\) are both companion matrices to irreducible polynomials \(m_A\) and \(m_B\), both of degree \(n\).  Thus \(\Q_p[A]\) is a field extension of \(\Q_p\) of degree \(n\) \cite[Lemma 2.1]{diameter_finite_fields}. As such, it contains an intermediate field extension of degree \(d_1\) with  \(1<d_1<\sqrt{n}\). By the primitive element theorem, we may write this field extension as \(\Q_p[X]\) for some nonscalar \(X\in M_n(\Q_p)\).  Note that no polynomial in \(X\) can be an idempotent or a nilpotent, as this is impossible in the field \(\Q[X]\) since \(X\) is nonscalar.   It follows by \cite[Lemma 2.2]{diameter_finite_fields} that the minimal polynomial of \(X\) has degree \(d_1\), and the rational form of \(X\) consists of identical cells \(C_1\in M_d(\F)\), where \(C_1\) is the companion matrix of some monic polynomial of degree \(d_1\). This lets us write \[X=T(C_1\oplus\cdots\oplus C_1)T^{-1}.\]  An identical argument gives us that \(\Q_p[B]\) is a field with an intermediate field extension \(\Q_p[Y]\) of degree \(d_2\) with \(1<d_2<\sqrt{n}\), and we can write
\[Y=S(C_2\oplus\cdots\oplus C_2)S^{-1}\] for some \(D_1\in M_d(\F)\), where \(C_2\) is the companion matrix of some monic polynomial of degree \(d_2\).  The fact that \(d_1d_2<n\) allows us to apply \cite[Lemma 3.1]{diameter_finite_fields} to conclude that \(d(X,Y)\leq 3\).  Since \(A\) commutes with \(X\) and \(Y\) commutes with \(B\), we have \(d(A,B)\leq 5\).

Now assume that at least one of \(\Q_p[A]\) and \(\Q_p[B]\) is not a field.  We may apply \cite[Remark 3.4]{diameter_finite_fields} to deduce that \(d(A,B)\leq 5\).
\end{proof}

We use this proposition to prove two corollaries.

\begin{corollary}\label{corollary:small_prime_factor} Assume that the largest prime factor of \(n\) is strictly less than \(\sqrt{n}\), and that \(n\) is not a power of \(p\).  Then the diameter of \(\Gamma(\Q_p,n)\) is at most \(5\).
\end{corollary}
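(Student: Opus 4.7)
The plan is to apply Proposition \ref{prop:sqrt_n}, whose hypotheses are that $\Gamma(\Q_p,n)$ is connected and that every degree-$n$ extension of $\Q_p$ contains a subextension of degree strictly less than $\sqrt{n}$.

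Connectivity is immediate from Theorem \ref{theorem:connectivity_qp}: the hypothesis that the largest prime factor of $n$ is strictly less than $\sqrt{n}$ rules out $n$ being prime, since a prime $n$ is its own largest prime factor and $n\geq\sqrt{n}$ for all $n\geq 1$; and $n$ not being a power of $p$ is given.

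For the subextension condition, I would take any extension $\LL/\Q_p$ of degree $n$ and apply the standard decomposition $\Q_p\subseteq \K'\subseteq \K''\subseteq \LL$ from Section \ref{section:connectivity}, with $[\K':\Q_p]=f$, $[\K'':\K']=e$, and $[\LL:\K'']=p^k$, so $n=fep^k$. Because $n$ is not a power of $p$, we have $fe>1$. I would then apply the opening step of the proof of Lemma \ref{lemma:mathoverflow} to the tame subextension $\K''/\Q_p$: if $f>1$, the unramified extension $\K'/\Q_p$ is Galois with cyclic Galois group and hence contains a subfield of any prime degree dividing $f$; and if $f=1$ but $e>1$, then up to conjugation $\K''=\Q_p(\sqrt[e]{up})$ for some $p$-adic unit $u$, and this contains $\Q_p(\sqrt[r]{up})$ as a subfield for every prime $r$ dividing $e$. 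Either way, $\LL$ acquires a subextension of some prime degree $r$ dividing $fe$ (hence dividing $n$), and $r$ is bounded by the largest prime factor of $n$, which by hypothesis is strictly less than $\sqrt{n}$.

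The only mild subtlety is that Lemma \ref{lemma:mathoverflow} itself cannot always be applied directly to $\LL/\Q_p$, since $p$ may divide $n$; but the opening step of its proof transfers to the tame subextension $\K''/\Q_p$ without requiring $p\nmid f$, because the unramified and totally-tamely-ramified pieces can be analyzed independently. Beyond this, the corollary is just a bookkeeping combination of Proposition \ref{prop:sqrt_n}, the structure theorem for $p$-adic extensions, and the observation that $n$ not being a power of $p$ is precisely the condition guaranteeing $fe>1$.
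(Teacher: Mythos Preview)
Your proposal is correct and follows essentially the same route as the paper: verify the hypotheses of Proposition~\ref{prop:sqrt_n} by passing to the maximal tame subextension $\K''/\Q_p$ (nontrivial since $n$ is not a power of $p$), then split according to whether $f>1$ (use the cyclic Galois structure of the unramified piece to extract a prime-degree subfield) or $f=1$, $e>1$ (use the explicit radical form $\Q_p((up)^{1/e})$ to extract $\Q_p((up)^{1/r})$), and observe that the resulting prime degree divides $n$ and hence is below $\sqrt{n}$. The only cosmetic difference is that you phrase the case analysis as reusing the opening step of Lemma~\ref{lemma:mathoverflow}, whereas the paper argues it directly without cross-referencing that lemma.
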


\begin{proof}  Given \(\K\) of degree \(n\) over \(\Q_p\), let  \(\K''/\Q_p\) be the maximal tame subextension, which must have degree \(m\) divisible by some prime factor \(q\) of \(\sqrt{n}\) with \(q\neq p\).  If \(\K''\) is totally ramified, then \(\K''=\Q_p((up)^{1/m})\) for some unit \(u\) in the \(p\)-adic integers.  In this case there is an intermediate field extension of degree \(q\), namely \(\Q_p((up)^{1/q})\).  If \(\K''\) is not totally ramified, there is an intermediate unramified extension \(\K'\), whose degree \(m'\) divides \(m\).  Since every unramified extension is Galois with cyclic Galois group, there is a subextension of \(\K'\) with degree equal to some prime factor of \(m\), which is strictly less than \(\sqrt{n}\).  In summary, for every extension \(\K/\Q_p\) of degree \(n\), there exists an intermediate extension of degree \(d<\sqrt{n}\).  This allows us to apply Proposition \ref{prop:sqrt_n} to complete the proof.
\end{proof}

\begin{corollary}\label{corollary:congruence}  Assume that \(n\) is an integer with a prime factor \(q\) distinct from \(p\), such that  \(q^2<n\) and at least one of the following three conditions holds:
\begin{itemize}
\item[(a)] for any \(f\) dividing \(n\) with \(q\nmid f\), we have \(q\nmid p^f-1\).
\item[(b)] \(p\equiv 1\mod q\).
\item[(c)] \(p\nmid n\).
\end{itemize}
Then the diameter of \(\Gamma(\Q_p,n)\) is at most \(5\).  
\end{corollary}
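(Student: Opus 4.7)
The plan is to apply Proposition~\ref{prop:sqrt_n}. Since $q\mid n$, $q\neq p$, and $q^2<n$, the integer $n$ is neither prime nor a power of $p$, so $\Gamma(\Q_p,n)$ is connected by Theorem~\ref{theorem:connectivity_qp}. Because $q<\sqrt{n}$, it suffices to exhibit, for every degree-$n$ extension $\K/\Q_p$, an intermediate subfield whose degree over $\Q_p$ is at most $q$.

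Case~(c) is immediate: since $p\nmid n$, the smallest prime factor $q'$ of $n$ satisfies $q'\leq q<\sqrt{n}$, and Lemma~\ref{lemma:mathoverflow} directly produces an intermediate subfield of degree $q'$ over $\Q_p$. For cases~(a) and~(b), I work with the standard decomposition $\Q_p\subset \K'\subset \K''\subset \K$, where $\K'=\Q_p(\zeta)$ is the maximal unramified subextension of degree $f$ (with $\zeta$ a primitive $(p^f-1)$-st root of unity), $\K''=\K'(\alpha)$ is tame totally ramified of degree $e$ over $\K'$ with $\alpha^e=p\zeta^r$, and $[\K:\K'']=p^k$. Since $q\mid n=efp^k$ and $q\neq p$, either $q\mid f$ or $q\mid e$. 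If $q\mid f$ the cyclic Galois extension $\K'/\Q_p$ already contains a degree-$q$ subextension, so the remaining task is the sub-case $q\nmid f$ and $q\mid e$, where I set $\beta=\alpha^{e/q}\in \K''$ satisfying $\beta^q=p\zeta^r$.

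Under~(a), the hypothesis applied to the divisor $f$ of $n$ forces $\gcd(q,p^f-1)=1$, so $\zeta^r=\gamma^q$ for some root of unity $\gamma\in\K'$, and then $(\beta\gamma^{-1})^q=p$. Since $x^q-p$ is Eisenstein, $\Q_p(\beta\gamma^{-1})=\Q_p(p^{1/q})\subset\K$ is a genuine degree-$q$ subextension. Under~(b), $q\mid p-1$ ensures $\Q_p$ already contains the $q$-th roots of unity. The key step will be showing that $\K'(\beta)/\Q_p$ is Galois: if $\sigma$ generates $\mathrm{Gal}(\K'/\Q_p)$, acting on $\zeta$ as Frobenius, then $\sigma(\beta)^q=p\zeta^{rp}=\beta^q\cdot\zeta^{r(p-1)}$, and $\zeta^{r(p-1)}$ is a $q$-th power in $\K'$ because $q\mid p-1$ divides $r(p-1)$ while $q\mid p^f-1$. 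Hence $\sigma$ lifts to $\K'(\beta)$, so $\mathrm{Gal}(\K'(\beta)/\Q_p)$ has order $fq$ with normal subgroup $\mathrm{Gal}(\K'(\beta)/\K')\cong C_q$ and quotient $C_f$. Since $\gcd(q,f)=1$, Schur--Zassenhaus (or a direct argument exhausting the two possible orders of a lift of the quotient generator) supplies a complementary subgroup of order $f$, whose fixed field is a degree-$q$ extension of $\Q_p$ inside $\K$.

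The main obstacle is the Galois verification in case~(b), which hinges on the congruence gymnastics with $\zeta^{r(p-1)}$; the remaining steps are either direct invocations of previous results or elementary manipulations of roots of unity in $\K'$. Once each case has produced the desired small-degree subextension, Proposition~\ref{prop:sqrt_n} immediately yields $\mathrm{diam}(\Gamma(\Q_p,n))\leq 5$.
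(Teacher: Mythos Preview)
Your proposal is correct and follows essentially the same route as the paper: reduce to Proposition~\ref{prop:sqrt_n} by producing a degree-$q$ subextension, use the standard $\Q_p\subset\K'\subset\K''\subset\K$ decomposition, dispose of the case $q\mid f$ via the cyclic unramified piece, and handle case~(c) via Lemma~\ref{lemma:mathoverflow}. Your treatment of case~(a) (writing $\zeta^r$ as a $q$-th power when $\gcd(q,p^f-1)=1$) is a clean rephrasing of the paper's ``take $s=0$'' step.

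The only real divergence is case~(b). The paper simply invokes the criterion from \cite[Ch.~16]{hasse} that $p\equiv 1\bmod q$ makes the tame extension $\Q_p(\zeta,(p\zeta^r)^{1/q})/\Q_p$ \emph{abelian}, and then reads off an index-$q$ subgroup directly. You instead verify by hand that this extension is Galois (lifting Frobenius using $q\mid r(p-1)$) and then appeal to Schur--Zassenhaus with $\gcd(q,f)=1$ to split off a complement of order $f$. Your argument is slightly longer but more self-contained, avoiding the black-box citation; the paper's is shorter but leans on the abelian-extension classification. Both land on the same degree-$q$ subfield.
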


For instance, using condition (b) and choosing $q=2$, the diameter of \(\Gamma(\mathbb{Q}_p,n)\) is at most \(5\) if \(n\) is even and \(p\neq 2\). 
To see that this does not cover all cases covered by Corollary \ref{corollary:small_prime_factor}, consider  \(p=2\) and \(n=12\).  To try to apply Corollary \ref{corollary:congruence}, we would have to choose \(q=3\), but \(2^4-1\) is divisible by \(3\), and $2\not\equiv 1\mod 3$.

\begin{proof}  First we note that the assumptions on \(n\) and \(q\) imply that \(n\) is neither prime nor a power of \(p\), so the commuting graph is connected.  Let \(\K/\Q_p\) be a extesion of degree \(n\).  It will suffice to show that there is a subextension of degree \(q\), since \(q<\sqrt{n}\) will allow us to apply Proposition \ref{prop:sqrt_n}.

Let \(\Q_p\subset\K'\subset \K''\subset \K \) be the decomposition of the extension \(\K/\Q_p\), where \(\K'\) is the maximal unramified subextension and \(\K''\) is the maximal tamely ramified subextension.  If \(q\) divides \([\K':\mathbb{Q}_p]\), we are done as this is a Galois extension with cyclic Galois group of order divisible by \(q\).  Thus we may assume that \(q\) does not divide \([\K':\mathbb{Q}_p]\), and since \([\K:\K'']\) is a power of \(p\) we know that \(q\) divides \([\K'':\K']\)

Let \(f=[\K':\mathbb{Q}_p]\) and \(e=[\K'':\K']\).  We have that
\[\K''=\Q_p(\zeta,(up\zeta^s)^{1/e})\]
where \(\zeta\) is a primitive \((p^f-1)th\) root of unity, \(u\) is a unit in the \(p\)-adic integers, and \(0\leq s<\gcd(e,p^f-1)\).  Consider the subfield 
\[\LL=\Q_p(\zeta,(up\zeta^s)^{1/q}).\]
Note that this extension is tamely ramified.  If condition (a) is satisfied, we may take \(0\leq s<\gcd(q,p^f-1)=1\):
\[\LL=\Q_p(\zeta,(up)^{1/q}).\]
This contains the subfield
\[\Q_p((up)^{1/q}),\]
which has degree at least \(q\) by the Tower Law, and at most \(q\) since \((up)^{1/q}\) is a root of the polynomial \(x^q-up\in \Q_p[x]\).  Thus we have an intermediate field of degree \(q\).

If on the other hand condition (b) is satisfied, by \cite[Ch. 16]{hasse} we know that \(\LL\) is an abelian Galois extension of \(\Q_p\).  That Galois group has order \(qf\), and since it is Abelian it has a subgroup of order \(f\) and index \(q\), which corresponds to an intermediate field extension of degree \(q\). 

Finally, if condition (c) holds, we may without loss of generality assume that \(q\) is the smallest prime factor of \(n\).  This lets us apply Lemma \ref{lemma:mathoverflow} to deduce that every extension of degree \(n\) over \(\mathbb{Q}_p\) has an extension of degree \(q\), as desired.
\end{proof}


Now armed with these results, we can prove what diameters are possible for $\Gamma(\mathbb{Q}_p,n)$ as $p$ and $n$ vary.

\begin{theorem}\label{theorem:all_diameters_possible}
    There exist choices of $p$ and $n$ such that the diameter of
$\Gamma(\mathbb{Q}_p,n)$ is $4$, $5$, $6$, and $\infty$.
\end{theorem}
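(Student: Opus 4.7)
The plan is to exhibit a specific $(p,n)$ pair realizing each of the four possible diameters. Three of the four cases are immediate from results already proved. For $\textrm{diam}=\infty$, any prime value of $n$ (for instance $n=3$) makes $\Gamma(\mathbb{Q}_p,n)$ disconnected by Theorem~\ref{theorem:connectivity_qp}. For $\textrm{diam}=4$, the pair $(p,n)=(3,4)$ works by Proposition~\ref{prop:diameter_four}. For $\textrm{diam}=6$, the pair $(p,n)=(2,14)$ works by Theorem~\ref{theorem:q2_14}.

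The substantive case is diameter exactly $5$. My first attempt would be to take $n=q^2$ for some odd prime $q$ and some prime $p\neq q$, so that Proposition~\ref{prop:square_of_prime} supplies the lower bound $\textrm{diam}\geq 5$ immediately. The difficulty is then the matching upper bound: every upper-bound tool in the paper (Proposition~\ref{prop:sqrt_n} together with Corollaries~\ref{corollary:small_prime_factor} and \ref{corollary:congruence}) requires a proper intermediate subextension of degree strictly less than $\sqrt{n}$, but when $n=q^2$ the only available proper subextension has degree exactly $q=\sqrt{n}$, so the strict inequality underlying each of those results fails.

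To close this gap, I would pursue one of two strategies. First, I could try to extend the argument of Proposition~\ref{prop:sqrt_n} to the boundary case $d_1 d_2 = n$ in the specific setting $n=q^2$ with $p\neq q$: such extensions of $\mathbb{Q}_p$ are all tamely ramified with a well-understood structure, and by Krasner's lemma only finitely many isomorphism classes occur, so one might produce commuting idempotents connecting any two ``$q$-identical-block'' matrices in $M_{q^2}(\mathbb{Q}_p)$ via a path of length three. Alternatively, I could pivot to a different $(p,n)$ in which $n$ has enough small prime factors for Corollary~\ref{corollary:congruence} or Corollary~\ref{corollary:small_prime_factor} to give the upper bound $\leq 5$, while Proposition~\ref{prop:lower_bound_mod_p}, applied to a known diameter-$5$ example over $\mathbb{F}_p$ from \cite{diameter_5_equal}, supplies the matching lower bound. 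The main obstacle in either route is engineering the lower and upper bounds to meet at exactly $5$, since the hypotheses of Proposition~\ref{prop:square_of_prime} and those of the paper's upper-bound results are in direct tension.
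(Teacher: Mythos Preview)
Your handling of diameters $4$, $6$, and $\infty$ matches the paper. For diameter $5$, your second alternative is exactly the route the paper takes: it picks $(p,n)=(2,15)$, obtains the upper bound $\textrm{diam}(\Gamma(\mathbb{Q}_2,15))\leq 5$ from Corollary~\ref{corollary:congruence} with $q=3$ via condition~(c) (since $3^2<15$ and $2\nmid 15$), and obtains the lower bound from Proposition~\ref{prop:lower_bound_mod_p} together with the result in \cite[\S 4]{diameter_5_equal} that $\textrm{diam}(\Gamma(\mathbb{F}_2,15))=5$.

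The only genuine gap in your proposal is that you stop at the strategy and do not name a concrete pair or verify that the cited literature actually supplies the needed $\mathbb{F}_p$ example; once you point to $(2,15)$ and that specific result in \cite{diameter_5_equal}, the proof is complete. Your first alternative (pushing Proposition~\ref{prop:sqrt_n} to the boundary $d_1d_2=n$ when $n=q^2$) is not pursued in the paper and would require substantially more work, so the second route is the one to commit to.
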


\begin{proof}
    Diameter $4$ is achieved by $(p,n)=(p,4)$ for any prime $p\neq 2$ by Proposition \ref{prop:diameter_four}.  Diameter $6$ is achieved by $(p,n)=(2,2q)$ for any prime $q\geq 7$ by Theorem \ref{theorem:q2_14}.  Diameter $\infty$ is achieved by $(p,n)$ whenever $n$ is prime or a power of $p$ by Theorem \ref{theorem:connectivity_qp}.

    We claim that diameter $5$ is achieved by $(p,n)=(2,15)$.  We know the diameter of $\Gamma(\mathbb{Q}_2,15)$ is at most $5$ by Corollary \ref{corollary:congruence} with $q=3$ by condition (c).  It was show in \cite[\S 4]{diameter_5_equal} that the diameter of $\Gamma(\mathbb{F}_2,15)$ equals $5$, which by Proposition \ref{prop:lower_bound_mod_p} implies that the diameter of $\Gamma(\mathbb{Q}_2,15)$ is at least $5$.  This yields a diameter of exactly $5$.
\end{proof}

In many cases, we now have partial or definitive knowledge about the diameter of \(\Gamma(\mathbb{Q}_p,n)\).  We know precisely when it is finite; we know some instances when it is equal to \(4\), to $5$, and to \(6\); and we know many instances where it is at least \(5\) or at most \(5\).  There is still much work to do; for instance, we do not know if there are infinitely many choices of \(p\) and \(n\) that yield a diameter of $5$.  We remark that it was conjectured in \cite{real3} that if $\mathbb{F}$ is a finite field and $n$ is an odd composite number, then $\Gamma(\mathbb{F},n)$ has diameter equal to $5$.  Combined with our Proposition \ref{prop:lower_bound_mod_p} and Corollary \ref{corollary:congruence}, such a result would imply that $\textrm{diam}(\Gamma(\mathbb{Q}_p,n))=5$ as long as $n$ is an odd composite that is not the square with a prime, with $p\nmid n$.

Since there are only finitely many extensions of \(\Q_p\) of fixed degree, one can in principal check all degree \(n\) extensions for intermediate subfields of size less than \(\sqrt{n}\) to determine whether Proposition \ref{prop:sqrt_n} can be applied to bound diameter by $5$.  Using \cite{lmfdb}, we unfortunately find examples of $(p,n)$ where Proposition \ref{prop:sqrt_n} does not apply even when $n$ has a prime divisor smaller than $\sqrt{n}$, namely \((2,6)\) (\cite[$p$-adic field 2.6.6.4]{lmfdb}), \((2,10)\) (\cite[$p$-adic field 2.10.10.15]{lmfdb}), and \((3,15)\) (\cite[p-adic field 3.15.15.51]{lmfdb}).  These remain possible candidates to yield a diameter of \(6\).


\begin{table}[hbt]
\begin{tabular}{c|c|c|c|c|c|c|c|c|c|}
\cline{2-10}
 & $\mathbb{Q}_2$ & $\mathbb{Q}_3$ & $\mathbb{Q}_5$ & $\mathbb{Q}_7$ & $\mathbb{Q}_{11}$ & $\mathbb{Q}_{13}$ & $\mathbb{Q}_{17}$ & $\mathbb{Q}_{19}$ & $\mathbb{Q}_{23}$ \\ \hline
\multicolumn{1}{|c|}{$n=4$} & X & $4$ & $4$ & $4$ & $4$ & $4$ & $4$ & $4$ & $4$ \\ \hline
\multicolumn{1}{|c|}{$n=6$} & ? & $\leq 5$ & $\leq 5$ & $\leq 5$ & $\leq 5$ & $\leq 5$ & $\leq 5$ & $\leq 5$ & $\leq 5$ \\ \hline
\multicolumn{1}{|c|}{$n=8$} & X & $\leq 5$ & $\leq 5$ & $\leq 5$ & $\leq 5$ & $\leq 5$ & $\leq 5$ & $\leq 5$ & $\leq 5$ \\ \hline
\multicolumn{1}{|c|}{$n=9$} & $\geq 5$ & X & $\geq 5$ & $\geq 5$ & $\geq 5$ & $\geq 5$ & $\geq 5$ & $\geq 5$ & $\geq 5$ \\ \hline
\multicolumn{1}{|c|}{$n=10$} & ? & $\leq 5$ & $\leq 5$ & $\leq 5$ & $\leq 5$ & $\leq 5$ & $\leq 5$ & $\leq 5$ & $\leq 5$ \\ \hline
\multicolumn{1}{|c|}{$n=12$} & $\leq 5$ & $\leq 5$ & $\leq 5$ & $\leq 5$ & $\leq 5$ & $\leq 5$ & $\leq 5$ & $\leq 5$ & $\leq 5$ \\ \hline
\multicolumn{1}{|c|}{$n=14$} & 6 & $\leq 5$ & $\leq 5$ & $\leq 5$ & $\leq 5$ & $\leq 5$ & $\leq 5$ & $\leq 5$ & $\leq 5$ \\ \hline
\multicolumn{1}{|c|}{$n=15$} & $5$ & ? & $\leq 5$ & $\leq 5$ & $\leq 5$ & $\leq 5$ & $\leq 5$ & $\leq 5$ & $\leq 5$ \\ \hline
\multicolumn{1}{|c|}{$n=16$} & X & $\leq 5$ & $\leq 5$ & $\leq 5$ & $\leq 5$ & $\leq 5$ & $\leq 5$ & $\leq 5$ & $\leq 5$ \\ \hline
\multicolumn{1}{|c|}{$n=18$} & $\leq 5$ & $\leq 5$ & $\leq 5$ & $\leq 5$ & $\leq 5$ & $\leq 5$ & $\leq 5$ & $\leq 5$ & $\leq 5$ \\ \hline
\end{tabular}
\caption{Some known results on the diameter of the commuting graph \(\Gamma(\mathbb{Q}_p,n)\)}\label{table:summary}
\end{table}

In Table \ref{table:summary} we summarize our known bounds on the diameter of \(\Gamma(\mathbb{Q}_p,n)\) for \(n\leq 18\) and \(p\leq 23\).  We omit \(n\) prime, as this yields disconnected graphs for all \(p\).  An X indicates that the commuting graph is disconnected, and a ? that none of our results give us information beyond the universal bounds of at least \(4\) and at most \(6\). The results yielding the table's bounds are as follows:
\begin{itemize}
\item The disconnected graphs follow from Theorem \ref{theorem:connectivity_qp}.
\item The case \(n=14\), \(p=2\) follows from Theorem \ref{theorem:q2_14}.
\item  The case \(n=4\), \(p\geq 3\) follows from Proposition \ref{prop:diameter_four}.
\item  The case of \(n\geq 6\) even, \(p\geq 3\) follows from Corollary \ref{corollary:congruence} using condition (b).
\item The case of \(n=9\), \(p\neq 3\) follows from Proposition \ref{prop:square_of_prime}.
\item The case of \(n=15\), \(p=2\) was argued in the proof of Theorem \ref{theorem:all_diameters_possible}.
\item The case of \(n=15\), \(p=5\) follows from Corollary \ref{corollary:congruence} using condition (a).
\item  The case of \(n=15\) and \(p\notin \{3,5\}\) follows from Corollary \ref{corollary:congruence} using condition (c). 
\item The case of \(n\in\{12,18\}\), \(p=2\) follows from Corollary \ref{corollary:small_prime_factor}.
\end{itemize}
Characterizing precisely when diameters of \(4\), \(5\), and \(6\) occur would be an interesting direction for future research.

In the case of \(\Gamma(\mathbb{Q}_p,n)\) with \(n\geq 3\) prime or a power of \(p\), although the graph is disconnected, we can still ask for the diameters of its components.  By \cite[Lemma 4.1]{diameter_commuting}, all but one component is a clique, with the remaining component having diameter \(4\), \(5\), or \(6\).  An argument identical to that of \cite[Example 4.3]{diameter_commuting} shows that if \(n\) is prime, then the non-clique component of \(\Gamma(\mathbb{Q}_p,n)\) has diameter \(4\).  In the case that \(n=p^2\), \cite[Theorem 3.2]{diameter_5_equal} implies that the diameter of the non-clique is at least \(5\).  In the case that \(n=p^k\) for some \(k\geq 3\), we do not know of any results that narrow down  the diameter of the non-clique.

\bibliographystyle{alpha}

\begin{thebibliography}{DKBKO16}

\bibitem[ABM08]{when_connected}
S.~Akbari, H.~Bidkhori, and A.~Mohammadian.
\newblock Commuting graphs of matrix algebras.
\newblock {\em Comm. Algebra}, 36(11):4020--4031, 2008.

\bibitem[AGHM04]{disconnected_n=2}
S.~Akbari, M.~Ghandehari, M.~Hadian, and A.~Mohammadian.
\newblock On commuting graphs of semisimple rings.
\newblock {\em Linear Algebra Appl.}, 390:345--355, 2004.

\bibitem[AMRR06]{diameter_6}
S.~Akbari, A.~Mohammadian, H.~Radjavi, and P.~Raja.
\newblock On the diameters of commuting graphs.
\newblock {\em Linear Algebra Appl.}, 418(1):161--176, 2006.

\bibitem[DBK18]{diameter_5_equal}
David Dol\v{z}an, Damjana~Kokol Bukov\v{s}ek, and Bojan Kuzma.
\newblock On the lower bound for diameter of commuting graph of prime-square
  sized matrices.
\newblock {\em Filomat}, 32(17):5993--6000, 2018.

\bibitem[DCDM17]{powers_of_p}
Ilaria Del~Corso, Roberto Dvornicich, and Maurizio Monge.
\newblock On wild extensions of a {$p$}-adic field.
\newblock {\em J. Number Theory}, 174:322--342, 2017.

\bibitem[DGKO14]{diameter_5_at_least}
Gregor Dolinar, Alexander Guterman, Bojan Kuzma, and Polona Oblak.
\newblock Commuting graphs and extremal centralizers.
\newblock {\em Ars Math. Contemp.}, 7(2):453--459, 2014.

\bibitem[DKBK17]{diameter_commuting}
D.~Dol\v{z}an, D.~Kokol~Bukov\v{s}ek, and B.~Kuzma.
\newblock On diameter of components in commuting graphs.
\newblock {\em Linear Algebra Appl.}, 522:161--174, 2017.

\bibitem[DKBKO16]{diameter_finite_fields}
David Dol\v{z}an, Damjana Kokol~Bukov\v{s}ek, Bojan Kuzma, and Polona Oblak.
\newblock On diameter of the commuting graph of a full matrix algebra over a
  finite field.
\newblock {\em Finite Fields Appl.}, 37:36--45, 2016.

\bibitem[GOMT17]{real2}
J.~M. Grau, A.~M. Oller-Marc\'{e}n, and C.~Tasis.
\newblock On the diameter of the commuting graph of the full matrix ring over
  the real numbers.
\newblock {\em Bull. Iranian Math. Soc.}, 43(1):217--221, 2017.

\bibitem[Has80]{hasse}
Helmut Hasse.
\newblock {\em Number theory}, volume 229 of {\em Grundlehren der
  Mathematischen Wissenschaften [Fundamental Principles of Mathematical
  Sciences]}.
\newblock Springer-Verlag, Berlin-New York, 1980.
\newblock Translated from the third German edition and with a preface by Horst
  G\"{u}nter Zimmer.

\bibitem[HK04]{enumeration}
Xiang-Dong Hou and Kevin Keating.
\newblock Enumeration of isomorphism classes of extensions of {$p$}-adic
  fields.
\newblock {\em J. Number Theory}, 104(1):14--61, 2004.

\bibitem[Kra66]{krasner}
Marc Krasner.
\newblock Nombre des extensions d'un degr\'{e} donn\'{e} d'un corps
  {$p$-adique}.
\newblock In {\em Les {T}endances {G}\'{e}om. en {A}lg\`ebre et {T}h\'{e}orie
  des {N}ombres}, pages 143--169. \'{E}ditions du Centre National de la
  Recherche Scientifique (CNRS), Paris, 1966.

\bibitem[{LMF}23]{lmfdb}
The {LMFDB Collaboration}.
\newblock The {L}-functions and modular forms database.
\newblock \url{http://www.lmfdb.org}, 2023.
\newblock [Online; accessed 9 January 2023].

\bibitem[Mig13]{real1}
C.~Miguel.
\newblock A note on a conjecture about commuting graphs.
\newblock {\em Linear Algebra Appl.}, 438(12):4750--4756, 2013.

\bibitem[Neu99]{neukirch}
J\"{u}rgen Neukirch.
\newblock {\em Algebraic number theory}, volume 322 of {\em Grundlehren der
  mathematischen Wissenschaften [Fundamental Principles of Mathematical
  Sciences]}.
\newblock Springer-Verlag, Berlin, 1999.
\newblock Translated from the 1992 German original and with a note by Norbert
  Schappacher, With a foreword by G. Harder.

\bibitem[Shi16]{counterexample_to_conjecture}
Yaroslav Shitov.
\newblock A matrix ring with commuting graph of maximal diameter.
\newblock {\em J. Combin. Theory Ser. A}, 141:127--135, 2016.

\bibitem[Shi18]{real3}
Ya.~N. Shitov.
\newblock Distances on the commuting graph of the ring of real matrices.
\newblock {\em Mat. Zametki}, 103(5):765--768, 2018.

\bibitem[Wed64]{derogatory}
J.~H.~M. Wedderburn.
\newblock {\em Lectures on matrices}.
\newblock Dover Publications, Inc., New York, 1964.

\end{thebibliography}

\end{document}